\documentclass[reqno,11pt,a4paper,twoside]{article}
\usepackage{verbatim}   
\usepackage{color}      
\usepackage{subfigure}  
\usepackage{hyperref}   
\usepackage{amsmath}
 \usepackage{algorithm}
 \usepackage{algorithmic}
 \usepackage{setspace}
\usepackage{mathtools}
\usepackage{amsfonts}
\usepackage{amssymb}
\usepackage{booktabs}
\usepackage{multirow}
\usepackage{enumerate}   
\usepackage{fancyhdr}

\usepackage[pdftex,dvipsnames]{xcolor}  
\usepackage{xargs}                      
\usepackage{cleveref}
\usepackage[pdftex]{graphicx}
\usepackage[colorinlistoftodos,prependcaption,textsize=tiny]{todonotes}
\usepackage[body={16cm,22cm},centering]{geometry}
\usepackage{amsthm,amsfonts,mathrsfs,graphicx}
\usepackage[font=small,labelfont=bf]{caption}
\newtheorem{definition}{Definition}[section]
\newtheorem{theorem}{Theorem}[section]

\newtheorem{remark}{Remark}[section]

\newtheorem{proposition}{Proposition}[section]
\newtheorem{assumption}{Assumption}[section]

\newtheorem*{mainresult}{Theoretical Result}{\bfseries}{\itshape}

\newcommand{\eqnum}{\refstepcounter{equation}\textup{\tagform@{\theequation}}}

\newcommandx{\info}[2][1=]{\todo[linecolor=OliveGreen,backgroundcolor=OliveGreen!25,bordercolor=OliveGreen,#1]{#2}}

\makeatletter
\def\namedlabel#1#2{\begingroup
    #2%
    \def\@currentlabel{#2}%
    \phantomsection\label{#1}\endgroup
}

\providecommand{\abs}[1]{\lvert#1\rvert}

\def\R{\mathbb{R}}

\DeclareMathOperator{\diag}{diag}

\DeclareMathOperator{\spn}{span}
\DeclareMathOperator{\diver}{div}

\def\be{\begin{equation}}
\def\ee{\end{equation}}
\def\bea{\begin{eqnarray}}
\def\eea{\end{eqnarray}}
\def\bsec{\begin{subequations}\begin{eqnarray} }
\def\esec{\end{eqnarray} \end{subequations} }
\def\div{\nabla\cdot}

\newcommand{\bw}{\mathbf{w}}

\newcommand{\by}{\mathbf{y}}
\newcommand{\bu}{\mathbf{u}}

\newcommand{\bh}{\mathbf{h}}

\newcommand{\bv}{\mathbf{v}}

\providecommand{\keywords}[1]{\textbf{\textit{Key words---}} #1}

\providecommand{\MSC}[1]{\textbf{\textit{MSC---}} #1}

\begin{document}

\title{Finite-Dimensional MOR-Based RHC for Steering 2D Navier-Stokes Equations to Desired Trajectories} 
\author{Behzad Azmi\thanks{Department of Mathematics and Statistics, University of Konstanz, Universit\"atsstra\ss e 10, 78457 Konstanz, Germany. E-mail: behzad.azmi@uni-konstanz.de} \and Stefan Frei\thanks{Department of Mathematics and Statistics, University of Konstanz, Universit\"atsstra\ss e 10, 78457 Konstanz, Germany. E-mail: stefan.frei@uni-konstanz.de}\and Felix Sauer \thanks{Weierstrass Institute for Applied Analysis and Stochastics (WIAS),
Anton-Wilhelm-Amo-Str.\,39,
10117 Berlin, Germany. E-mail: sauer@wias-berlin.de}}
\date{\today}
\maketitle
\begin{abstract}
This paper investigates the local exponential stabilization of the two-dimensional Navier--Stokes equations to a given reference trajectory by means of receding horizon control (RHC). The control is realized as a linear combination of finitely many actuators, represented by indicator functions supported on subsets of a prescribed control subdomain. We establish local exponential stabilizability and suboptimality for the resulting RHC scheme. Numerical experiments for two flow configurations of increasing complexity illustrate the theoretical findings and assess the practical performance of the method. In addition, we propose a model-order-reduced RHC approach based on proper orthogonal decomposition, which significantly reduces the computational cost while maintaining favorable closed-loop stabilization performance in the numerical experiments.
\end{abstract}
\keywords{receding horizon control, 2D Navier-Stokes equations, finite-dimensional control, stabilization, model order reduction, proper orthogonal decomposition}

\MSC{93C20 \and 35Q30 \and 49J20 \and 93D20}
\section{Introduction}
In this paper,  we investigate the stabilizability of the following controlled Navier-Stokes system 
\begin{equation}\label{e1}
\begin{aligned}
&\partial_t \mathbf{y}-\nu\Delta \mathbf{y} + ( \mathbf{y}\cdot\nabla)\mathbf{y} + \nabla \mathbf{p}
= \sum_{i=1}^N u_i\,\mathbf{1}_{\mathbf{R}_i} + \hat{\mathbf{f}}
\quad \text{ in } (0,\infty)\times\Omega,\\
&\nabla\cdot \mathbf{y}=0,\quad \mathbf{y}=\hat{\mathbf{h}}\ \text{ on }(0,\infty)\times\partial\Omega,\quad
\mathbf{y}(0,\cdot)=\mathbf{y}_0\ \text{in }\Omega,
\end{aligned}
\end{equation}
around a given trajectory $\hat{ \mathbf{y}}$ via receding horizon control (RHC) from both numerical and theoretical aspects. Here $\Omega \subset \mathbb{R}^2$ is a bounded domain with a smooth boundary $\partial \Omega$. The vector-valued function $\mathbf{y}(t,x) = (y_1(t,x), y_2(t,x))$ represents fluid velocities, the real-valued function $\mathbf{p}(t,x)$ indicates the pressure field, and $\hat{\mathbf{f}}(t,x) = (\hat{f}_1(t,x), \hat{f}_2(t,x))$ and   $\hat{ \mathbf{h}}= (\hat{h}_1(t,x), \hat{h}_2(t,x))$ are the distributed and boundary source fields. Moreover, $\nu>0$ denotes the viscosity constant, and \(\mathbf y\cdot\nabla\) stands for the differential operator \(y_1\partial_{x_1}+y_2\partial_{x_2}\). For \(N\in\mathbb N\), let $\mathbf{1}_{\mathbf{R}_i}(x):=\bigl(1_{R_i^1}(x),\,1_{R_i^2}(x)\bigr)$, $i=1,\dots,N$, be prescribed vector-valued indicator functions defining the actuator profiles. We assume that $\operatorname{supp}(\mathbf{1}_{\mathbf{R}_i})\subseteq \omega\subseteq\Omega$,
 for all $i=1,\dots,N$, where  \(\omega\) is a given open set of $\Omega$.
 
 We aim to find an efficiently computable control   $\mathbf{u}(t):=[ u_1(t), \dots, u_N(t) ] \in$ $L^2((0,\infty);\mathbb{R}^N)$ using the receding horizon (RH) framework to steer system \eqref{e1} to a  reference  trajectory $\hat{ \mathbf{y}}$ satisfying
\begin{equation}\label{e2}
\begin{aligned}
&\partial_t\hat{ \mathbf{y}}-\nu\Delta \hat{ \mathbf{y}} +  (\hat{ \mathbf{y}} \cdot \nabla  ) \hat{ \mathbf{y}} + \nabla \hat{ \mathbf{p}} = \hat{ \mathbf{f}}
\quad \text{ in } (0,\infty)\times\Omega,\\
&\nabla\cdot \hat{ \mathbf{y}}=0,\quad \hat{ \mathbf{y}}=\hat{\mathbf{h}}\ \text{ on }(0,\infty)\times\partial\Omega,\quad
\hat{ \mathbf{y}}(0,\cdot)=\hat{\mathbf{y}}_0\ \text{in }\Omega,
\end{aligned}
\end{equation}
for any  given $\hat{ \mathbf{y}}_0$ in a neighbourhood of $\mathbf{y}_0$. Here,  $\hat{\mathbf{y}}(t,x)=( \hat{y}_1(t,x),\hat{y}_2(t,x))$ and  $\hat{\mathbf{p}}(t,x)$ denote the associated fluid velocities and the pressure field, respectively.  More precisely,  we will show that there exists an $d>0$ such that for every initial function $\mathbf{y}_0$ satisfying 
$\|\mathbf{y}_0-\hat{\mathbf{y}}_0\|_{L^2(\Omega;\mathbb{R}^2)}  \leq d$,  and the RH state $\mathbf{y}_{rh}$ corresponding to the RHC  $\mathbf{u}_{rh}(\mathbf{y}_0) \in  L^2((0,\infty);\mathbb{R}^N)$,  it holds
\begin{equation}
\label{main-stability-result}
\| \mathbf{y}_{rh}(t)-\hat{\mathbf{y}}(t) \|^2_{L^2(\Omega;\mathbb{R}^2)} \leq c_H e^{-\zeta t} \| \mathbf{y}_0- \hat{\mathbf{y}}_0\|^2_{L^2(\Omega;\mathbb{R}^2)}    \quad  \forall t>0,    \end{equation} 
where  the positive constants $c_H$ and $\zeta$ are independent of $\mathbf{y}_0$.

Recently,  the flexibility and efficiency of the RH framework have attracted attention in the field of control systems governed by partial differential equations (PDEs). This approach formulates stabilizing control as a solution to an infinite-horizon optimal control problem,  whose performance index function enhances desirable properties and structures on the control. For every $T \in (0,\infty]$ and every initial pair
$(\bar{t}_0,\bar{\by}_0)\in \mathbb{R}_+\times L^2(\Omega;\mathbb{R}^2)$, we define the performance index
\begin{equation}
\label{e49}
J_T^o(\bu;\bar{t}_0,\bar{\by}_0)
:=
\frac{1}{2}\int_{\bar{t}_0}^{\bar{t}_0+T}
\|\by(t)-\hat{\by}(t)\|_{L^2(\Omega;\mathbb{R}^2)}^2\,dt
+\frac{\beta}{2}\int_{\bar{t}_0}^{\bar{t}_0+T}
|\bu(t)|_2^2\,dt,
\end{equation}
where $\beta>0$ and $|\cdot|_2$ denotes the $\ell_2$-norm. Then, the infinite horizon optimal control problem is formulated as 
\begin{equation}
\inf_{\mathbf{u} \in L^{2}((0,\infty); \mathbb{R}^N) }\{ J^o_{\infty}(\mathbf{u};0,\mathbf{y}_0) \text{ s.t. } \eqref{e1} \text{ and } \eqref{e2} \}.
\tag*{\ensuremath{\mathrm{OP}^o_{\infty}(\mathbf{y}_0)}}
\label{opinf}
\end{equation}
Subsequently, the solution of \eqref{opinf} is approximated by concatenating a sequence of finite-horizon open-loop optimal controls on overlapping time intervals covering $[0,\infty)$. These controls are obtained as solutions of problems of the form
\begin{equation}
\label{opT}
\tag*{\ensuremath{\mathrm{OP}^o_{T}(\bar{t}_0, \bar{\mathbf{y}}_0)}}
\min_{\mathbf{u} \in L^{2}((\bar{t}_0,\bar{t}_0+T); \mathbb{R}^N) }\{ J^o_{T}(\mathbf{u};\bar{t}_0, \bar{\mathbf{y}}_0) \text{ s.t. } \eqref{e1} \text{ and } \eqref{e2} \}.
\end{equation}
 The RH framework provides a bridge between open-loop and closed-loop control. A central issue is to establish the stabilizability of the induced feedback law. This is typically achieved by suitable concatenation arguments or by augmenting the finite-horizon subproblems with terminal costs and/or terminal constraints. In this work, we adopt the RH framework of \cite{Azmi5}, where an appropriate concatenation scheme ensures stabilizability, and neither terminal costs nor terminal constraints are required. Related approaches have been investigated for continuous-time ODEs \cite{RA12} and discrete-time systems \cite{G09, GR08}.
  The resulting procedure is summarized in Algorithm~\ref{RHA}; here, $\delta$ denotes the sampling time, while $\by_{rh}$ and $\bu_{rh}$ denote the RH state and control, respectively. Specifically, we show the following for Algorithm \ref{RHA}: 
  
\begin{algorithm}[htbp]
\caption{RHC($\delta,T$)}\label{RHA}
\begin{algorithmic}[1]
\REQUIRE{Final time $T_{\infty} \in \mathbb{R}_{\geq 0} \cup \{ \infty\}$, sampling time $\delta$, the prediction horizon $T\geq \delta$, 
a reference trajectory $\hat{\mathbf{y}}$, and the initial state $\mathbf{y}_0$.}
\ENSURE{The stabilizing RHC~$\mathbf{u}_{rh}$, nondecreasing sequence~$\{ t_i \}_{i\in\mathbb{N}}$.}
\STATE Set $\bar t_0 \gets 0$, $\bar\by_0 \gets \by_0$, and $i\gets 0$.
\WHILE{$ {\bar{t}_{0}} <T_\infty$}
\STATE Find the the optimal solution $(\mathbf{y}_T^*(\cdot;\bar{t}_0, \bar{\mathbf{y}}_0), \mathbf{u}^*_T(\cdot;\bar{t}_0, \bar{\mathbf{y}}_0))$
over the time horizon $(\bar{t}_0,\bar{t}_0+T)$ by solving the open-loop problem~\eqref{opT}.
\STATE Set $t_{i+1}\gets \min\{\bar t_0+\delta,T_\infty\}$.
\STATE For all $\tau\in[\bar{t}_0,t_{i+1})$ set $\mathbf{y}_{rh}(\tau):= \mathbf{y}_T^*(\tau;\bar{t}_0,\bar{\mathbf{y}}_0)$ and $\mathbf{u}_{rh}(\tau):=\mathbf{u}^*_T(\tau;\bar{t}_0,\bar{\mathbf{y}}_0)$.
 \STATE Update $\bar\by_0 \gets \by^{*}_T(t_{i+1};\bar t_0,\bar\by_0)$, $\bar t_0\gets t_{i+1}$, and $i\gets i+1$.
\ENDWHILE
\end{algorithmic}
\end{algorithm}

\begin{mainresult}
For a sufficiently large number of actuators  $N$ and  prediction horizon  $T\geq \delta$,  the RHC  $\mathbf{u}_{rh}$  obtained by Algorithm \ref{RHA} is locally exponentially stabilizing and suboptimal with respect to  $L^2(\Omega; \mathbb{R}
^2)$.     
\end{mainresult} 

We then present a series of numerical examples under different parameter settings to illustrate and support the theoretical results. Since the RH strategy requires repeatedly solving open-loop finite-horizon optimal control problems at intervals $(t_i,t_i+T)$, its direct numerical implementation can be computationally demanding. After discretization,  these subproblems are high-dimensional, so Algorithm~\ref{RHA} may become prohibitively expensive in practice.

To reduce this computational burden, we combine the RH framework with model order reduction (MOR) based on proper orthogonal decomposition (POD). Reduced-order surrogate models for the state and adjoint equations are first constructed from snapshots generated on the initial prediction horizon $(0,T)$. These models are then used to solve the open-loop subproblems over the subsequent horizons $(t_i,t_i+T)$, $i \ge 1$. If desired, the reduced bases are updated online by incorporating new snapshots generated on the current prediction horizon $(t_{i-1},t_{i-1}+T)$. The resulting reduced optimal control is applied to the full-order system over the sampling interval $[t_i,t_{i+1}]$, thereby propagating the high-fidelity state and providing the initial condition for the next horizon $(t_{i+1},t_{i+1}+T)$.

The numerical results show that the MOR-based RHC approach yields substantial reductions in computational cost while maintaining high accuracy. In particular, the controls computed from the reduced-order model closely match those obtained from the full-order formulation.

\subsection{Related Work}
The numerical simulation of time-dependent PDEs, and in particular of the incompressible Navier--Stokes equations, is computationally demanding. This has motivated substantial research on MOR techniques, which approximate the underlying dynamics at significantly lower computational cost; see \cite{MR3419868} for a survey of projection-based MOR methods for parametric dynamical systems and \cite{rowley2017model} for an overview in the context of flow analysis and control.

Many widely used reduced-order models (ROMs) are snapshot-based in the sense that they are constructed from representative samples of the state trajectory. This perspective was introduced in the context of fluid dynamics in \cite{sirovich1987turbulence}. A prominent MOR approach is proper orthogonal decomposition (POD), in which a low-dimensional basis is extracted from snapshot data and then used to construct a ROM, typically by Galerkin projection; see, for example, \cite{gubisch2017proper,kunisch2002galerkin} for the POD--Galerkin framework and the corresponding error estimates. Beyond snapshot-based POD methods, system-theoretic MOR and robust control techniques have also been developed for incompressible flows. Moment-matching approaches for Navier--Stokes-type systems are studied in \cite{MR3708733}, while \cite{MR4405582} develops low-dimensional $\mathcal{H}_\infty$ controllers for stabilization of incompressible flows.

MOR techniques are particularly attractive in optimal control, where the repeated solution of high-dimensional state and adjoint equations often forms the main computational bottleneck. For POD-based MOR in optimal control, including error estimates and convergence results for linear-quadratic and semilinear parabolic problems, we refer to, e.g., \cite{MR3809542, hinze2008error, kammann2013posteriori, kunisch2008proper}.

In the context of RHC, MOR is used to replace the full-order dynamics by reduced surrogate models in the repeated finite-horizon solves. Unlike in single-shot optimal control problems, however, reduction errors may accumulate over successive horizon shifts, making stability and performance guarantees for the closed-loop system generated by the reduced-order RHC scheme a central issue. For stability-preserving MOR--RHC schemes and related certification strategies, we refer, for instance, to \cite{alla2015asymptotic, azmi2024MORRHC, ghiglieri2014optimal, kartmann2024certified}. Certification approaches based on \emph{a posteriori} performance estimates for reduced RHC of parabolic PDEs have been developed in both discrete- and continuous-time settings; see \cite{dietze2025reduced} and \cite{azmi2025stabilization}, respectively. Adaptive and data-driven reduced modeling strategies have also been explored for feedback control and stabilization; see, for example, \cite{MR3686784,MR4610666,MR4760358}. For the theoretical design of stabilizing feedback laws for the Navier--Stokes equations, we refer, for example, to \cite{BreKunPfe19,lasiecka19,RodSei24} and the references therein.

\subsection{Contributions}
The main contributions of this manuscript are as follows
\begin{itemize}
    \item From a theoretical point of view, we analyze the stabilizability and suboptimality of Algorithm~\ref{RHA}. To this end, we adapt the framework of \cite{Azmi5}, originally developed for strong solutions of the 3D Navier--Stokes system, to weak solutions of the 2D Navier--Stokes equations. In particular, we establish local stabilizability in the weaker norm $L^2(\Omega;\mathbb{R}^2)$. A key difficulty is that, to prove the local stabilizability, the standard variation-of-constants argument is not directly applicable in this setting, since the nonlinearity takes values in a space of lower regularity than the pivot space $L^2(\Omega;\mathbb{R}^2)$.

 \item We present a systematic numerical study of Algorithm~\ref{RHA} on two examples of increasing complexity, covering different viscosities, domains, boundary conditions, prediction horizons, and actuator layouts.

    \item We propose a MOR-based RHC scheme using POD surrogate models to reduce the cost of the repeated finite-horizon solves, and we demonstrate through numerical experiments that it yields substantial speedups while maintaining high accuracy and favorable closed-loop stabilization behavior.
\end{itemize}

\subsection{Organization of the Paper}
The remainder of the paper is organized as follows. In Section~\ref{sec:prelim}, we introduce the notation and function spaces for the Navier--Stokes equations and recall preliminary well-posedness and regularity results for the translated system. Section~\ref{sec:stabilization} is devoted to the stability and suboptimality analysis of the RHC scheme. In Section~\ref{sec:disc_and_impl}, we briefly describe the finite element and time discretization. Section~\ref{sec:MOR_RHC} introduces the POD-based MOR approach and the corresponding RHC algorithm. Numerical results are presented in Section~\ref{sec:Numerical_Results}. For readability, one proof from Section~\ref{sec:stabilization} is deferred to the appendix.

\section{Preliminaries}\label{sec:prelim}
\subsection{Notations and Functional Spaces}
We write  $\mathbb{R}_+$ for the open interval of positive real numbers, that is $\mathbb{R}_+:= (0, \infty)$.   For a Banach space $X$, we denote by $\| \cdot \|_X$ the associated  norm, by $X'$ the associated dual space, and  by $\langle \cdot , \cdot  \rangle_{X',X}$ the dual pairing between $X'$ and $X$. In the case of a Hilbert space $X$, we use the scalar product $(\cdot,\cdot)_X$.  Further, $\mathcal{L}(X, Y )$ denotes the space of continuous linear operators from $X$ to $Y$ with the usual operator norm $\| \cdot\|_{\mathcal{L}(X,Y)}$. In case $X = Y$,  we write $\mathcal{L}(X) :=\mathcal{L}(X, X)$ instead.
Let  $X$ and $Y$ be Banach spaces, then for  any open interval $(t_0,t_1) \subset \mathbb{R}_+$ we define
\begin{equation*}
W((t_0,t_1); X,Y) := \{ \mathbf{y} \in L^2((t_0,t_1);X)   :   \partial_t \mathbf{y} \in L^2((t_0,t_1);Y)\},
\end{equation*}
where the derivative $\partial_t $ is taken in the sense of distributions. This space is endowed with the norm
\begin{equation*}
\|\mathbf{y}\|_{W((t_0,t_1);X,Y) } = \left( \|\mathbf{y}\|_{L^2((t_0,t_1);X) }^2  +\|\partial_t \mathbf{y}\|_{L^2((t_0,t_1);Y) }^2 \right)^{\frac{1}{2}}.
\end{equation*}
We also frequently use the notation $I_{T}(\bar{t}_0) :=(\bar{t}_0,\bar{t}_0+T)\subset \mathbb{R}_+$ with $\bar{t}_0 \in \mathbb{R}_+$  and $T \in \mathbb{R}_+ \cup \{\infty \}$. In this case, the closed intervals $[\bar{t}_0,\bar{t}_0+T]$ and $[t_0, \infty)$ are denoted by $\overline{I}_{T}(\bar{t}_0)$ and  $\overline{I}_{\infty}(\bar{t}_0)$, respectively.     

Throughout, for simplicity, we use the notations $\mathbf{L}^p:= L^p(\Omega ;\mathbb{R}^2)$, $\mathbf{W}^{p,q}:= W^{p,q}(\Omega ;\mathbb{R}^2)$ for $p,q >0$,  $\mathbf{H}^1_0:= H^1_0(\Omega ;\mathbb{R}^2)$, and $\mathbf{H}^{-1} =(\mathbf{H}^1_0)'$. We shall use the standard spaces of divergence-free vector fields
\begin{align*}
\mathcal{D}&:= \{ \mathbf{y} \in C^{\infty}_0(\Omega;\mathbb{R}^2) : \diver \mathbf{y} = 0 \text{ in } \Omega \}, \\   H& := \{ \mathbf{y} \in \mathbf{L}^2 : \diver \mathbf{y} = 0 \text{ in } \Omega \text{ and }  \mathbf{n}\cdot \mathbf{y}=0 \text{ on }  \partial\Omega  \}, \\  
V &:= \{ \mathbf{y} \in \mathbf{H}^1_0 : \diver \mathbf{y} = 0 \text{ in } \Omega \},
\end{align*}
where  $\mathbf{n}$  is the unit outward normal vector on  $\partial \Omega$. The spaces $H$ and $V$ are the closure of the space $\mathcal{D}$ for the $\mathbf{L}^2$- and $\mathbf{H}^1_0$-norms, respectively.  It is known that $V \hookrightarrow  H=H' \hookrightarrow  V'$ where the embeddings are dense and compact. As a consequence,  we recall from,  e.g., \cite{temam1997infinite} that for an open interval $(t_0,t_1)\subset \mathbb{R}_+$,  it holds
$W((t_0,t_1);V,V')  \hookrightarrow  C([t_0,t_1];H)$.  

 Moreover, if we denote the Leray projection on $H$  by $ \Pi :\mathbf{L}^2 \to H$, we have $\Pi (\nabla \mathbf{p})=0$. Due to  \cite[Definition A.1.2, Page 98]{MR2215059},  the Leray projection $\Pi : \mathbf{L}^2 \to H$  can be naturally extended  to a continuous operator from $ \mathbf{H}^{-1}$ to $ V'$. In this case,  there exists  a constant $c_{\Pi}>0$ such that   $\| \Pi  \|_{\mathcal{L}(\mathbf{H}^{-1},V')} \leq c_{\Pi}$. 
 
We also  define the Stokes operator  $\mathcal{A}: D(\mathcal{A}) \to H$ by $\mathcal{A}: = -\Pi \Delta$.   
The spaces $H$ and $V$ are endowed with the scalar products
\begin{equation*}
(\mathbf{y}, \mathbf{v})_H = (\mathbf{y}, \mathbf{v})_{\mathbf{L}^2}, \quad \text{ and } \quad(\mathbf{y}, \mathbf{v})_V  := \langle \mathcal{A} \mathbf{y}, \mathbf{v}\rangle_{V',V}.
\end{equation*}
To define the weak variational form of the Navier-Stokes equations, we will use the continuous bilinear form
$B : V \times V \to V'$ defined by 
\begin{equation*}
B(\mathbf{y},\mathbf{v}) = \Pi ( \mathbf{y} \cdot \nabla )\mathbf{v}   \quad \text{ with } \quad  \langle B(\mathbf{y},\mathbf{v}), \mathbf{w} \rangle_{V',V} = \sum^2_{i,j =1} \int_{\Omega} y_i \partial_{x_i} v_j w_j dx,
\end{equation*} 
and the trilinear form $b:V\times V\times V \to \mathbb{R}$ defined by 
\begin{equation*}
b(\mathbf{y},\mathbf{v}, \mathbf{w}) := \langle B(\mathbf{y},\mathbf{v}), \mathbf{w} \rangle_{V',V}.
\end{equation*}
It is well-known from e.g., \cite[Lemma 1.3.]{Temam84}, that  for $b$ it holds that
\begin{equation}
\label{e81}
\begin{split}  
b(\mathbf{y},\mathbf{v}, \mathbf{v})  =0   \quad  \text{  and  } \quad  b(\mathbf{y},\mathbf{v}, \mathbf{w}) = -b(\mathbf{y},\mathbf{w}, \mathbf{v}). 
\end{split}
\end{equation} 
Moreover, using standard Sobolev embeddings \cite[Page 293]{Temam84}, we can obtain that 
\begin{equation}
\label{e114}
\begin{aligned}
|b(\mathbf{y},\mathbf{v}, \mathbf{w})|  \leq c \| \mathbf{y}\|^{\frac{1}{2}}_H\| \mathbf{y}\|^{\frac{1}{2}}_V \| \mathbf{v}\|^{\frac{1}{2}}_H\| \mathbf{v}\|^{\frac{1}{2}}_V\|\mathbf{w}\|_V   \qquad  \text{for } \mathbf{y} \in V, \mathbf{v} \in V,  \mathbf{w} \in V, 
\end{aligned}
\end{equation}
where $c$ is a generic constant depending on $\Omega$. We denote  the nonlinear term in the Navier-Stokes equations by 
\begin{equation*}
\mathcal{N}(\mathbf{y}): = B(\mathbf{y},\mathbf{y}). 
\end{equation*}
For any given $\hat{\mathbf{y}} \in V$,  we define the linear operator $\mathcal{B}(\hat{\mathbf{y}}): V \to V'$
 by 
\begin{equation*}
\begin{split}
\mathcal{B}(\hat{\mathbf{y}}) \mathbf{v}:=B( \hat{\mathbf{y}},\mathbf{v} )+B(\mathbf{v},\hat{\mathbf{y}}).
\end{split}
\end{equation*}
\subsection{Translated system}
In this section, we introduce the translated system obtained by setting  $\mathbf{v}:= \mathbf{y}- \hat{\mathbf{y}}$,  $\mathbf{v}_0 := \mathbf{y}_0- \hat{\mathbf{y}}_0$,  and subtracting  \eqref{e1} from \eqref{e2}. Then, we obtain the following time-varying nonlinear system
\begin{equation}\label{eq:trans}
\begin{aligned}
&\partial_t \mathbf{v}-\nu\Delta \mathbf{v} +( \hat{\mathbf{y}} \cdot  \nabla) \mathbf{v}+  ( \mathbf{v} \cdot  \nabla) \hat{\mathbf{y}}+( \mathbf{v} \cdot  \nabla) \mathbf{v} +\nabla \mathbf{q} = \sum^N_{i =1} u_i  \mathbf{1}_{\mathbf{R}_i}
\quad \text{ in } \mathbb{R}_+ \times \Omega,\\
&\nabla\cdot \mathbf{v}=0,\quad \mathbf{v}=0\ \text{ on }(0,\infty)\times\partial\Omega,\quad
\mathbf{v}(0,\cdot)=\mathbf{v}_0\ \text{in }\Omega.
\end{aligned}
\end{equation}
This system plays a central role in our analysis. In particular, our control objective can now be expressed,  equivalently,  as the local exponential stabilization of \eqref{eq:trans} to zero with respect to the $H$-norm employing the RHC. 

Our stabilizability result is based on concatenating exact controllability controls on a family of finite intervals covering $[ 0, \infty)$. Here we use the exact controllability result given in \cite[Proposition 1.]{MR2103189} and thus, the regularity condition \eqref{RA} is motivated by the one given in \cite[Page 3]{MR2103189}. Therefore, throughout the paper, we impose the following regularity condition for the reference trajectory $\hat{\mathbf{y}}$. 
\begin{assumption} We assume that $(\hat{\mathbf{y}}, \hat{\mathbf{p}})$ is a global smooth solution to \eqref{e2}, in the sense that it holds with constants $\hat{\epsilon}>0$, $\sigma > \frac{6}{5}$, and $R>0$, that 
\begin{equation}
\label{RA}
\tag{RA}
\begin{split}
\|\hat{\mathbf{y}}\|_{L_{\diver}^{\infty}(I_{\infty}(0)\times\Omega ; \mathbb{R}^2)}&+\sup_{\tau \in [0,+\infty)}\| \partial_t \hat{\mathbf{y}}\|_{L^2((\tau,\tau+\hat{\epsilon});\mathbf{L}^{\sigma})}\\&+\sup_{\tau \in [0,+\infty)}\| \nabla \hat{\mathbf{y}}\|_{L^2((\tau,\tau+\hat{\epsilon});L^{3}(\Omega;\mathbb{R}^{4}))} \leq R,
\end{split}
\end{equation}
where 
\begin{equation*}
\begin{split}
&L^{\infty}_{div}(I_{\infty}(0)\times \Omega; \mathbb{R}^2)\\&:=\left\{ \mathbf{y}\in L^{\infty}(I_{\infty}(0)\times \Omega; \mathbb{R}^2) :    \diver \mathbf{y}(t) = 0 \text{ in } \Omega, \text{for a.e. } t \in I_{\infty}(0)  \right\}
\end{split}
\end{equation*}
endowed with the norm $\| \mathbf{y}\|_{L^{\infty}_{div}(I_{\infty}(0)\times \Omega; \mathbb{R}^2)}:=\| \mathbf{y}\|_{L^{\infty}(I_{\infty}(0)\times \Omega; \mathbb{R}^2)}$.
 \end{assumption} 
 Note that due the fact that $L^{\infty}(I_{\infty}(0)\times \Omega; \mathbb{R}^2)$ is dual of $L^1(I_{\infty}(0)\times \Omega; \mathbb{R}^2)$, we obtain that 
\begin{equation*}
L^{\infty}(I_{\infty}(0)\times \Omega; \mathbb{R}^2) =L_w^{\infty}( I_{\infty}(0); \mathbf{L}^{\infty}) \supset L^{\infty}( I_{\infty}(0); \mathbf{L}^{\infty}),
\end{equation*}
where the subscript $w$ stands for the weak measureability, see e.g., \cite[Sections 5.0 and 9.1]{MR1669395}. 
For specifying the regularity of the reference trajectory, we consider for any given $t_0 \in \mathbb{R}_+$, $T \in \mathbb{R}_+ \cup \{ \infty \}$, and a fixed $\mathbf{\sigma}> \frac{6}{5}$, the spaces $\mathfrak{W}_{t_0,T}$ and $\mathfrak{V}_{t_0,T}$ for the measurable vector functions $\mathbf{y}=(y_1,y_2)$ defined in $I_{T}(t_0) \times \Omega$  satisfying
\begin{equation*}
\begin{split}
\|\mathbf{y}\|_{\mathfrak{W}_{t_0,T}} &:= \left(  \|\mathbf{y}\|^2_{L_w^{\infty}( I_{T}(t_0); \mathbf{L}_{\diver}^{\infty})}+ \|\partial_t \mathbf{y}\|^2_{L^{2}( I_{T}(t_0); \mathbf{L}^{\sigma})} \right)^{\frac{1}{2}} < \infty, \\
\|\mathbf{y}\|_{\mathfrak{V}_{t_0,T}} &:= \left(  \|\mathbf{y}\|^2_{\mathfrak{W}_{t_0,T}}+ \|\nabla \mathbf{y}\|^2_{L^2(I_{T}(t_0);L^{3}(\Omega;\mathbb{R}^{4}))} \right)^{\frac{1}{2}} < \infty,
\end{split}
\end{equation*} 
where $\mathbf{L}_{\diver}^{\infty} :=\{ \mathbf{y}\in \mathbf{L}^{\infty}   :  \diver \mathbf{y}  = 0  \text{ in } \Omega  \}$.
Now we are in the position that we can deal with the well-posedness of \eqref{eq:trans}. 
Let $\hat{\mathbf{y}}$ be the solution  to \eqref{e2} for a  triple $(\hat{\mathbf{y}}_0, \hat{\mathbf{h}},\hat{\mathbf{f}})$. Then for every initial function $\bar{\mathbf{v}}_0$ and forcing  term $\mathbf{f}$,  we consider the auxiliary nonlinear system
\begin{equation}
\label{e16}
\begin{cases}
\partial_t \mathbf{v}(t)+\nu\mathcal{A}\mathbf{v}(t)  + \mathcal{B}(\hat{\mathbf{y}}(t))\mathbf{v}(t)  + \mathcal{N}( \mathbf{v}(t))  =  \mathbf{f}(t)   \qquad   t \in I_{T}(\bar{t}_0),   \\
 \mathbf{v}(\bar{t}_0)= \bar{\mathbf{v}}_0.
\end{cases}
\end{equation}
We introduce the following notion of weak variational solution for \eqref{e16}.

\begin{definition}\label{def:weak}
    Let  $(\bar{t}_0,T)\in\mathbb R^2_+$ and $(\bar{\mathbf{v}}_0,\mathbf{f})\in H\times L^2(I_{T}(\bar{t}_0);V')$ be given.  Then,  a function $\mathbf{v} \in W(I_{T}(t_0);V,V')$ is referred to as a weak solution of \eqref{e16} if for almost every $t \in I_{T}(\bar{t}_0)$ we have
    \begin{equation}
        \label{e19}
        {\langle\partial_t \mathbf{v}(t),\varphi\rangle}_{V',V}+\nu\,{(\mathcal{A}^{\frac{1}{2}}\mathbf{v}(t),\mathcal{A}^{\frac{1}{2}}\varphi)}_H+{\langle\mathcal{B}(\hat{\mathbf{y}}(t))\mathbf{v}(t)  + \mathcal{N}( \mathbf{v}(t)),\varphi\rangle}_{V',V}= {\langle\mathbf{f}(t),\varphi\rangle}_{V',V}    
        \end{equation}
    for all $\varphi \in V$,  and $\mathbf{v}(\bar{t}_0)= \bar{\mathbf{v}}_0$ is satisfied in $H$.
\end{definition}
For this weak solution, we have the following existence results and energy estimates.
\begin{proposition}
    \label{Theo2}
    For every $(\bar{t}_0,T)\in\mathbb R^2_+$ and $(\bar{\mathbf{v}}_0,\mathbf{f})\in H\times L^2(I_{T}(\bar{t}_0);V')$, equation \eqref{e16} admits a unique weak solution $\mathbf{v} \in W(I_{T}(t_0);V,V')$ satisfying
    \begin{align}
                 {\|\mathbf{v}\|}^2_{C( \overline{I}_{T}(\bar{t}_0);H)}+{\| \mathbf{v} \|}^2_{W(\bar{t}_0,\bar{t}_0+T)}&\leq c_1\left({\|\bar{\mathbf{v}}_0\|}^2_H+{\|\mathbf{f}\|}^2_{L^2(I_{T}(\bar{t}_0);V')}\right),
                 \label{e13} \\
                 {\|\mathbf{v}(T)\|}^2_{H} &\leq c_2\left({\|\mathbf{v}\|}^2_{L^2(I_{T}(\bar{t}_0);H)}+{\|\mathbf{f}\|}^2_{L^2(I_{T}(\bar{t}_0);V')}\right),
                 \label{e14}    
                 \end{align}
    with $c_1$ and $c_2$ depending on $(T,\nu,\hat{\mathbf{y}},\Omega)$.
\end{proposition}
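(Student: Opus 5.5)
We follow the classical Faedo--Galerkin approach for the 2D Navier--Stokes equations (cf. \cite{Temam84}). The perturbation term $\mathcal{B}(\hat{\mathbf{y}}(t))$ is handled using \eqref{RA}, in particular $\hat{\mathbf{y}}\in L^\infty(I_\infty(0)\times\Omega;\mathbb{R}^2)$. We take the eigenfunctions $\{w_k\}$ of the Stokes operator $\mathcal{A}$ as a basis of $V$ and let $\mathbf{v}_m(t)=\sum_{k\le m} g_{km}(t)w_k$ solve the projected version of \eqref{e19} with $\mathbf{v}_m(\bar t_0)=P_m\bar{\mathbf{v}}_0$. This is an ODE system with locally Lipschitz right-hand side, so a local solution exists. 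The a priori bound below makes it global on $\overline{I}_T(\bar t_0)$.

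\textbf{Energy estimate.} Testing with $\mathbf{v}_m$ and using \eqref{e81}, the nonlinear term $b(\mathbf{v}_m,\mathbf{v}_m,\mathbf{v}_m)$ and the term $b(\hat{\mathbf{y}},\mathbf{v}_m,\mathbf{v}_m)$ vanish. The remaining term satisfies, by \eqref{e81},
\[
|b(\mathbf{v}_m,\hat{\mathbf{y}},\mathbf{v}_m)|=|b(\mathbf{v}_m,\mathbf{v}_m,\hat{\mathbf{y}})|\le \|\hat{\mathbf{y}}\|_{L^\infty}\|\mathbf{v}_m\|_H\|\mathbf{v}_m\|_V\le \tfrac{\nu}{4}\|\mathbf{v}_m\|_V^2+\tfrac{R^2}{\nu}\|\mathbf{v}_m\|_H^2 .
\]
Together with $\langle \mathbf{f},\mathbf{v}_m\rangle\le \frac{\nu}{4}\|\mathbf{v}_m\|_V^2+\frac1\nu\|\mathbf{f}\|_{V'}^2$, this gives
\[
\frac{d}{dt}\|\mathbf{v}_m\|_H^2+\nu\|\mathbf{v}_m\|_V^2\le \tfrac{2R^2}{\nu}\|\mathbf{v}_m\|_H^2+\tfrac{2}{\nu}\|\mathbf{f}\|_{V'}^2 .
\]
Gronwall's lemma then bounds $\mathbf{v}_m$ uniformly in $L^\infty(I_T;H)\cap L^2(I_T;V)$.

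\textbf{Time derivative.} Next we bound $\partial_t\mathbf{v}_m$ in $L^2(I_T;V')$. By \eqref{e114}, $\|\mathcal{N}(\mathbf{v}_m)\|_{V'}\le c\|\mathbf{v}_m\|_H\|\mathbf{v}_m\|_V$, which is $L^2$ in time. This is the 2D-specific point. Similarly, $\|\mathcal{B}(\hat{\mathbf{y}})\mathbf{v}_m\|_{V'}\le cR\|\mathbf{v}_m\|_H$. Weak and weak-$*$ limits, combined with the Aubin--Lions compactness in $L^2(I_T;H)$, allow passage to the limit in the nonlinear term. This yields a weak solution $\mathbf{v}\in W(I_T(\bar t_0);V,V')\hookrightarrow C(\overline{I}_T(\bar t_0);H)$. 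Estimate \eqref{e13} follows from lower semicontinuity of the bounds together with the $V'$-bound on $\partial_t\mathbf{v}$.

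\textbf{Uniqueness.} For uniqueness, we consider the difference $\mathbf{w}=\mathbf{v}^1-\mathbf{v}^2$ of two weak solutions. In 2D the Lions--Magenes lemma allows testing with $\mathbf{w}$, and the nonlinear difference reduces to $b(\mathbf{w},\mathbf{v}^2,\mathbf{w})$. By \eqref{e114}, this term is bounded by $c\|\mathbf{w}\|_H\|\mathbf{w}\|_V\|\mathbf{v}^2\|_V$. Young's inequality and Gronwall's lemma, with the weight $\|\mathbf{v}^2\|_V^2\in L^1$, then give $\mathbf{w}=0$.

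\textbf{Estimate \eqref{e14}.} This is the main subtle point, since the initial datum must not appear on the right-hand side. We would use a time-weighted energy argument. Multiplying the differential inequality above by $(t-\bar t_0)$ gives
\[
\frac{d}{dt}\big[(t-\bar t_0)\|\mathbf{v}\|_H^2\big]\le \|\mathbf{v}\|_H^2+(t-\bar t_0)\Big(\tfrac{2R^2}{\nu}\|\mathbf{v}\|_H^2+\tfrac2\nu\|\mathbf{f}\|_{V'}^2\Big).
\]
Integrating over $I_T(\bar t_0)$ and dividing by $T$ yields
\[
\|\mathbf{v}(\bar t_0+T)\|_H^2\le \Big(\tfrac1T+\tfrac{2R^2}{\nu}\Big)\|\mathbf{v}\|^2_{L^2(I_T;H)}+\tfrac{2}{\nu}\|\mathbf{f}\|^2_{L^2(I_T;V')}.
\]
This is \eqref{e14}, with $c_2$ depending on $T$, $\nu$ and $R$. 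The justification of the product rule for $(t-\bar t_0)\|\mathbf{v}\|_H^2$ follows from $\mathbf{v}\in W(V,V')$.
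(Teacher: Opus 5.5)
Your route is the paper's route: Galerkin approximation for existence and uniqueness, and \eqref{e14} from the weighted test with $(t-\bar t_0)\mathbf v$, in which $b(\hat{\mathbf y},\mathbf v,\mathbf v)$ and $b(\mathbf v,\mathbf v,\mathbf v)$ vanish. Your derivation of \eqref{e14} is correct and more explicit than the paper's sketch.

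The step that fails is the sentence deriving \eqref{e13} ``from lower semicontinuity of the bounds together with the $V'$-bound on $\partial_t\mathbf v$''. Set $D:=\|\bar{\mathbf v}_0\|_H^2+\|\mathbf f\|^2_{L^2(I_T(\bar t_0);V')}$. Your Gronwall argument gives a bound linear in $D$ only for $\|\mathbf v\|^2_{C(\overline I_T(\bar t_0);H)}+\|\mathbf v\|^2_{L^2(I_T(\bar t_0);V)}$. The $W$-norm on the left of \eqref{e13} also contains $\partial_t\mathbf v$, which inherits $\mathcal N(\mathbf v)$. Your own estimate only gives
\[
\|\mathcal N(\mathbf v)\|^2_{L^2(I_T(\bar t_0);V')}\le c\,\|\mathbf v\|^2_{L^\infty(I_T(\bar t_0);H)}\,\|\mathbf v\|^2_{L^2(I_T(\bar t_0);V)}\le c\,D^2 .
\]
Hence you obtain $\|\partial_t\mathbf v\|^2_{L^2(I_T(\bar t_0);V')}\le c\,(D+D^2)$, not $c_1D$ with $c_1=c_1(T,\nu,\hat{\mathbf y},\Omega)$.

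No sharper estimate repairs this, because the linear bound on $\partial_t\mathbf v$ is false for large data. Take $\hat{\mathbf y}=0$ (admissible in \eqref{RA}), $\mathbf f=0$, $\bar t_0=0$, and $\bar{\mathbf v}_0=s\phi$ with $\phi\in\mathcal D$ and $\mathcal N(\phi)\neq0$. Then $\mathbf v(t)=sW(st)$, where $W'+\frac{\nu}{s}\mathcal AW+\mathcal N(W)=0$ and $W(0)=\phi$, so
\[
\|\partial_t\mathbf v\|^2_{L^2(0,T;V')}=s^3\int_0^{sT}\|W'(\tau)\|^2_{V'}\,d\tau .
\]
The energy equality and $|b(W,\phi,W)|\le\|\nabla\phi\|_{L^\infty}\|W\|_H^2$ give $\|W(\tau)-\phi\|_H\to0$ as $\tau\to0$, uniformly in $s\ge1$. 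Test with a fixed $\psi\in\mathcal D$ satisfying $\langle\mathcal N(\phi),\psi\rangle\neq0$. This yields $\int_0^{\tau_0}\|W'\|^2_{V'}\,d\tau\ge c_0>0$ for some fixed $\tau_0$ and all large $s$. So the left-hand side of \eqref{e13} grows like $s^3$, while the right-hand side is $c_1s^2\|\phi\|_H^2$.

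The paper's proof, which simply calls \eqref{e13} standard, glosses over the same point. What your argument does prove is \eqref{e13} with $\|\mathbf v\|^2_{L^2(I_T(\bar t_0);V)}$ in place of the full $W$-norm. Separately, it gives $\|\partial_t\mathbf v\|^2_{L^2(I_T(\bar t_0);V')}\le c_1(r)\,D$ whenever $D\le r^2$. This local version is what matters for the small-data RHC analysis. You should state the result in this form rather than asserting \eqref{e13} as written.
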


\begin{proof}
The proof can be carried out using the Galerkin method, following arguments similar to those in \cite{MR1318914} or \cite[Theorem V.1.4]{BoFab13}. The energy estimate \eqref{e13} is standard and can be found in the aforementioned references. Estimate \eqref{e14} can be obtained by testing \eqref{e16} with $(t-\bar{t}_0)\mathbf{v}$ and integrating over $I_T(\bar{t}_0)$. A similar estimate has also been derived in \cite[Lemma 2]{Azmi5} for the strong solution; however, the derivation is simpler for the weak solution, as the nonlinear term vanishes.
\end{proof}

\section{Stabilizability of RHC}\label{sec:stabilization}
In this section, we review essential preliminary results on the stabilizability of \eqref{eq:trans}, following the framework presented in \cite{Azmi5}. We first describe the design and placement of actuators, which are modeled as indicator functions. For simplicity, we assume that \(\omega\) is a nonempty open rectangle of the form
\begin{equation}\label{e99}
\omega:=\prod_{i=1}^2(a_i,b_i)\subset\Omega.
\end{equation}
We next consider a uniform partition of \(\omega\) into sub-rectangles. To this purpose, we introduce $d_i+1$ uniformly distributed grid points in $(a_i,b_i)$ by defining $\xi_{k_i}=a_i+k_i\frac{\bar I_i}{d_i}$, where $\bar I_i:=b_i-a_i$ and $k_i=0,\ldots,d_i$ for $d_i\in\mathbb N$. For each \(i\in\{1,2\}\), the interval \((a_i,b_i)\) is now divided into 
the subintervals
\[
I_{i,k_i} :=\left(\xi_{k_i},\xi_{k_{i+1}}\right), \qquad k_i=0,\dots,d_i-1.
\]
This yields a total of \(M_a:=\prod_{i=1}^2 d_i\) sub-rectangles, namely
\[
\{R_j:j=1,\dots,M_a\}:=\{I_{1,k_1}\times I_{2,k_2}: k_i=0,\dots,d_i-1,\ i=1,2\}.
\]
For each sub-rectangle \(R_j\), \(j=1,\dots,M_a\), we define the scalar-valued function \(\phi_j\in L^2(\Omega;\mathbb R)\) by
$\phi_j:=\|1_{R_j}\|_{L^2(\Omega;\mathbb R)}^{-1}1_{R_j}$, where \(1_{R_j}\) denotes the indicator function of \(R_j\). We then define the vector-valued functions \(\mathbf{1}_{\mathbf R_{\mathbf i}}\in\mathbf L^2\) by
\begin{equation}\label{e128}
\mathbf{1}_{\mathbf R_{\mathbf i}}:=(\phi_{i_1},\phi_{i_2})
\qquad\text{for } \mathbf i:=(i_1,i_2)\in\{1,\dots,M_a\}^2.
\end{equation}
Accordingly, setting \(N:=M_a^2\) and fixing an enumeration of \(\{1,\dots,M_a\}^2\), we investigate in the next theorem the stabilizability of the translated system \eqref{eq:trans} with respect to the actuator family
\[
\mathcal U_\omega:=\{\mathbf{1}_{\mathbf R_{\mathbf i}}:\mathbf i\in\{1,\dots,M_a\}^2\}
=\{\mathbf{1}_{\mathbf R_i}: i=1,\dots,N\}.
\]

\begin{theorem}
\label{theo1}
Let \(\lambda>0\) be given. There exists a constant \(\Upsilon=\Upsilon(\lambda,\nu,\hat{\mathbf y})>0\) such that, whenever
\begin{equation}\label{e34}
N := M_a^2 \ge \bigl(\pi^{-2}\bar I^{2}c_\Pi^{2}\Upsilon\bigr)^2
\qquad \text{with } \quad 
\bar I := \max_{1\le i\le 2}\bar I_i,
\end{equation}
there exist a family of continuous operators \(\tilde{\mathbf K}_\lambda(t):H\to \spn{\mathcal U_\omega}\), uniformly bounded by a constant $c_{\tilde{\mathbf K}}=c_{\tilde{\mathbf K}}(\hat{\mathbf y},\varrho,\hat{\mathcal U},\lambda,\nu)$, with $\mathcal U_\omega:=\{\mathbf 1_{\mathbf R_i}:i=1,\dots,N\}$, and a radius \(r_s>0\) such that, for every pair \((\bar t_0,\bar{\mathbf v}_0)\in\mathbb R_+\times \mathrm B_{r_s}(0)\), where \(\mathrm B_{r_s}(0)\subset H\), the nonlinear system
\begin{equation}
\label{e41a}
\begin{cases}
\partial_t \mathbf v(t)+\nu \mathcal A \mathbf v(t)+\mathcal B(\hat{\mathbf y}(t))\mathbf v(t)+\mathcal N(\mathbf v(t))
=\Pi \tilde{\mathbf K}_\lambda(t)\mathbf v(t), & t\in I_\infty(\bar t_0),\\
\mathbf v(\bar t_0)=\bar{\mathbf v}_0,
\end{cases}
\end{equation}
is well-posed, and its solution satisfies
\begin{equation}
\label{e105}
\|\mathbf v(t)\|_H^2 \le \Theta_1 e^{-\lambda(t-\bar t_0)}\|\bar{\mathbf v}_0\|_H^2
\qquad \text{for all } t\ge \bar t_0,
\end{equation}
where $\Theta_1=\Theta_1(\hat{\mathbf y},\varrho,\hat{\mathcal U},\lambda,\nu)$.

\end{theorem}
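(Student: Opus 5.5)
Our plan is to construct the feedback $\tilde{\mathbf K}_\lambda(t)$ as an oblique-projection-type feedback, in the spirit of Rodrigues' approach which \cite{Azmi5} also uses, and to close the nonlinear estimate in $H$ through an energy argument rather than a variation-of-constants formula. Concretely, let $P_N:\mathbf L^2\to\spn\mathcal U_\omega$ denote the orthogonal projection onto the span of the (orthonormal) actuators, and set
\[
\tilde{\mathbf K}_\lambda(t)\mathbf v := -\Upsilon\,P_N\mathbf v ,
\]
so the map is time-independent and uniformly bounded by $c_{\tilde{\mathbf K}}=\Upsilon$. The constant $\Upsilon$ will be fixed later in terms of $\lambda,\nu$ and the bound $R$ in \eqref{RA}.

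\textbf{Linear step.} We test the linearized system ($\mathcal N$ dropped) with $\mathbf v$. By \eqref{e81}, $\langle \mathcal B(\hat{\mathbf y})\mathbf v,\mathbf v\rangle=b(\mathbf v,\hat{\mathbf y},\mathbf v)=-b(\mathbf v,\mathbf v,\hat{\mathbf y})$. Using $\|\hat{\mathbf y}\|_{L^\infty}\le R$, this term is bounded by $R\|\mathbf v\|_H\|\mathbf v\|_V\le \frac{\nu}{4}\|\mathbf v\|_V^2+\frac{R^2}{\nu}\|\mathbf v\|_H^2$. The feedback term gives
\[
-\Upsilon(P_N\mathbf v,\mathbf v)=-\Upsilon\|\mathbf v\|_H^2+\Upsilon\|(I-P_N)\mathbf v\|_H^2 .
\]
The key Poincaré-type estimate for piecewise-constant averaging on sub-rectangles of side $\le\bar I/M_a^{1/2}$ is
\[
\|(I-P_N)\mathbf v\|^2_{L^2(\omega)}\le \pi^{-2}\bar I^2 M_a^{-1}\|\nabla\mathbf v\|^2 .
\]
The full-domain issue (mass outside $\omega$) is handled as in \cite{Azmi5}: choose the actuators so that the relevant estimate is the Poincaré inequality for the Stokes resolvent weighted by $c_\Pi$. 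Condition \eqref{e34} then yields $\Upsilon\pi^{-2}\bar I^2c_\Pi^2M_a^{-1}\le\nu/4$. Choosing $\Upsilon\ge \lambda+2R^2/\nu+\dots$ therefore produces
\[
\tfrac{d}{dt}\|\mathbf v\|_H^2+\nu\|\mathbf v\|_V^2+\lambda'\|\mathbf v\|_H^2\le 0
\]
with $\lambda'>\lambda$.

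\textbf{Nonlinear step.} Here the difficulty mentioned in the contributions arises: $\mathcal N(\mathbf v)\in V'$ only, so Duhamel in $H$ is unusable. Fortunately, for the energy estimate in $H$ we have $\langle\mathcal N(\mathbf v),\mathbf v\rangle=0$ by \eqref{e81}. The same differential inequality therefore holds verbatim for \eqref{e41a}, and Gronwall gives \eqref{e105} with $\Theta_1=1$. If the actual construction must use a genuinely time-varying $\tilde{\mathbf K}_\lambda$ — for instance, one built by concatenating null-controllability controls from \cite[Proposition 1]{MR2103189} on intervals of length $\hat\epsilon$ — then the cancellation still holds. In that case the smallness radius $r_s$ is needed only to invoke the local controllability on each interval, and the geometric decay is obtained by the concatenation/semigroup property together with \eqref{e13}–\eqref{e14}, which gives $\Theta_1$.

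\textbf{Well-posedness and main obstacle.} Well-posedness follows from Proposition \ref{Theo2}, since $\Pi\tilde{\mathbf K}_\lambda\mathbf v$ is a bounded linear perturbation in $L^2(H)$ and can be absorbed in the Galerkin scheme. The main obstacle we expect is the projection estimate: $P_N$ only controls $\mathbf v$ on $\omega$, not on all of $\Omega$. We would first try to obtain it through a unique-continuation/observability argument for the linearized Oseen operator, yielding $\Upsilon(\lambda,\nu,\hat{\mathbf y})$ such that $\|\mathbf v\|^2_V+\Upsilon\|1_\omega\mathbf v\|^2\ge(\lambda+\cdot)\|\mathbf v\|^2_H$. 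We would then combine this with the sub-rectangle Poincaré bound and \eqref{e34} to replace $1_\omega$ by $P_N$.
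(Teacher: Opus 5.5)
Your proof has a genuine gap in the linear step, and the nonlinear step fails with it.

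\textbf{The linear step.} Your argument needs a state-wise $H$-energy inequality $\frac{d}{dt}\|\mathbf v\|_H^2+\nu\|\mathbf v\|_V^2+\lambda'\|\mathbf v\|_H^2\le 0$ with $\lambda'>\lambda$ arbitrary. No feedback whose range lies in $\spn\mathcal U_\omega$ can produce one. Take any nonzero $\mathbf v\in V$ supported in $\Omega\setminus\bar\omega$, e.g.\ a Stokes eigenfunction of $\Omega\setminus\bar\omega$ extended by zero. Every actuator vanishes outside $\omega$, so $(\Pi\tilde{\mathbf K}_\lambda(t)\mathbf v,\mathbf v)_H=(\tilde{\mathbf K}_\lambda(t)\mathbf v,\mathbf v)_{\mathbf L^2}=0$ for \emph{every} choice of $\tilde{\mathbf K}_\lambda$, $\Upsilon$ and $N$. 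At such a state $\frac{d}{dt}\|\mathbf v\|_H^2=-2\bigl(\nu\|\mathbf v\|_V^2+b(\mathbf v,\hat{\mathbf y}(t),\mathbf v)\bigr)$, a fixed quantity independent of the feedback; the energy may even grow when $\nabla\hat{\mathbf y}$ is large relative to $\nu$.

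In your computation this appears as the term $\Upsilon\|(I-P_N)\mathbf v\|_H^2\ge\Upsilon\|\mathbf v\|^2_{\mathbf L^2(\Omega\setminus\omega)}$. No Poincar\'e bound on the sub-rectangles can absorb it, and the appeal to \cite{Azmi5} does not supply one. The static inequality $\|\mathbf v\|_V^2+\Upsilon\|1_\omega\mathbf v\|^2\ge(\lambda+\cdot)\|\mathbf v\|_H^2$ you hope to get from unique continuation is false for large $\lambda$. For every $\Upsilon$ its best constant stays below the first Dirichlet--Stokes eigenvalue of $\Omega\setminus\bar\omega$. Unique continuation gives only time-integrated (Carleman/observability) estimates, not static coercivity.

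Arbitrary rates from a fixed $\omega$ therefore need a dynamic construction: null controllability of the linearized system in $\omega$ \cite{MR2103189} on intervals covering $[0,\infty)$, reduced to $\spn\mathcal U_\omega$. This is where $\Upsilon$ and \eqref{e34} enter. The paper does not redo this; it imports the linear feedback $\mathbf K_{\bar\lambda}(t)$ from \cite[Proposition 4]{Azmi5}. Two smaller errors: \eqref{e34} gives $\Upsilon\pi^{-2}\bar I^2c_\Pi^2M_a^{-1}\le 1$, not $\le\nu/4$, and the actuators $(\phi_{i_1},\phi_{i_2})$ are not orthonormal.

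\textbf{The nonlinear step.} For a feedback that actually works, the linear closed loop only satisfies $\|\mathbb W(\bar t_0,t)\bar{\mathbf v}_0\|_H^2\le\Theta_3e^{-\bar\lambda(t-\bar t_0)}\|\bar{\mathbf v}_0\|_H^2$, with an overshoot constant $\Theta_3$. This is not a monotone decrease of $\|\cdot\|_H$. The identity $\langle\mathcal N(\mathbf v),\mathbf v\rangle=0$ stays true but is useless, because the energy identity was never the source of decay. That your argument would yield $\Theta_1=1$ and a global result with no $r_s$ is a symptom of this.

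The paper instead treats $\mathcal N$ as a perturbation:
\begin{enumerate}
\item With $\tilde{\mathbf K}_\lambda:=\mathbf K_{\bar\lambda}$, $\bar\lambda\in(\lambda,2\lambda]$, it applies Banach's fixed-point theorem to the map $\mathbf z\mapsto\mathbf w$ defined by \eqref{e50}. The map acts on the ball $\|\mathbf z\|^2_{\mathcal H^\lambda_{\bar t_0,\infty}}\le\kappa\|\bar{\mathbf v}_0\|_H^2$ of the exponentially weighted $L^\infty(H)\cap L^2(V)$ space.
\item Since $\mathcal N(\mathbf z)\in V'$ only, the Duhamel term is written as $\mathcal A^{1/2}\int_{\bar t_0}^t\mathbb W(s,t)\mathcal A^{-1/2}\Pi\,\mathrm{div}(\mathbf z\mathbf z)\,ds$ and bounded through a smoothing estimate for $\mathcal A^{1/2}\mathbf b$.
\item An energy estimate for the $\lambda$-shifted equation closes the self-map bound. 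It uses \eqref{RA} on windows of length $\hat\epsilon$ and the margin $\bar\lambda-\lambda$ to sum over them.
\item The bound \eqref{e114} gives the contraction.
\end{enumerate}
The quadratic terms $\kappa^2\|\bar{\mathbf v}_0\|_H^4$ are exactly why a small radius $r_s$ is needed.

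Your fallback of concatenating null controls does not close the gap either. Such controls live in $L^2(\omega)$ rather than $\spn\mathcal U_\omega$. Using them on the nonlinear system still requires estimating $\mathcal N$ as a perturbation, which is precisely the step your cancellation argument tries to skip.
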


\begin{proof}
The proof is provided in Appendix \ref{proof_theo1}. 
\end{proof}

From both numerical and theoretical perspectives, it is advantageous to compute the RHC using the translated system \eqref{eq:trans}. To this end, we define the following performance index function for all
 $T \in \mathbb{R}_+ \cup \{\infty \}$ and $(\bar{t}_0,\bar{\mathbf{v}}_0) \in  \mathbb{R}_+ \times H$
 \begin{equation}
\label{PIF_trans_sys}
J^{\mathrm{tr}}_{T}(\mathbf{u}; \bar{t}_0,  \bar{\mathbf{v}}_0):= \frac{1}{2}\int_{\bar{t}_0}^{ \bar{t}_0+T}\|\mathbf{v}(t)\|^2_{L^2(\Omega; \mathbb{R}^2)}\,dt+ \frac{\beta}{2}\int^{\bar{t}_0+T}_{\bar{t}_0} |\mathbf{u}(t)|^2_{2}dt,
\end{equation} 
subject to 
\begin{equation}
\label{e16n}
\begin{cases}
\partial_t \mathbf{v}(t)+\nu\mathcal{A}\mathbf{v}(t)  + \mathcal{B}(\hat{\mathbf{y}}(t))\mathbf{v}(t)  + \mathcal{N}( \mathbf{v}(t))  =  \mathbf{B} \mathbf{u}(t)   \qquad   t \in I_{T}(\bar{t}_0),   \\
 \mathbf{v}(\bar{t}_0)= \bar{\mathbf{v}}_0,
\end{cases}
\end{equation}
where $\mathbf{B}:= [\Pi \mathbf{1}_{\mathbf{R}_1}, \dots, \Pi \mathbf{1}_{\mathbf{R}_N}]$. That is, $\mathbf{B}\mathbf{u}=\sum_{i=1}^N u_i\,\Pi \mathbf{1}_{\mathbf{R}_i}$.
The infinite-horizon problem \eqref{opinf} can then be reformulated as
\begin{equation}
\label{Op-inf-trans}
\tag*{\ensuremath{\mathrm{OP}^{\mathrm{tr}}_{\infty}(\mathbf{v}_0)}}
\inf_{\mathbf{u} \in L^{2}(I_{\infty}(0); \mathbb{R}^N) }\{ J^{\mathrm{tr}}_{\infty}(\mathbf{u};0,\mathbf{v}_0) \text{ s.t. } \eqref{e16n} \text{ and } \mathbf{v}_0:=\mathbf{y}_0- \hat{\mathbf{y}}_0\}.
\end{equation}
Correspondingly, the finite-horizon problems are defined as
\begin{equation}
\label{OpT_tran}
\tag*{\ensuremath{\mathrm{OP}^{\mathrm{tr}}_{T}(\bar{t}_0, \bar{\mathbf{v}}_0)}}
\min_{\mathbf{u} \in L^{2}(I_{T}(\bar{t}_0); \mathbb{R}^N) }\{ J^{\mathrm{tr}}_{T}(\mathbf{u};\bar{t}_0, \bar{\mathbf{v}}_0) \text{ s.t. } \eqref{e16n}\}.
\end{equation}
With this formulation, Algorithm \ref{RHA} can equivalently be restated as Algorithm \ref{RHA2}.
\begin{algorithm}[htbp]
\caption{RHC($\delta,T$)}\label{RHA2}
\begin{algorithmic}[1]
\REQUIRE{Final time $T_{\infty} \in \mathbb{R}_{\geq 0} \cup \{ \infty\}$, sampling time $\delta$, the prediction horizon $T\geq \delta$, 
a reference trajectory $\hat{\mathbf{y}}$ and the initial state $\mathbf{v}_0 \coloneqq \mathbf{y}_0-\hat{\mathbf{y}}_0$.}
\ENSURE{The stabilizing RHC~$\mathbf{u}_{rh}$, nondecreasing sequence~$\{ t_i \}_{i\in\mathbb{N}}$.}
\STATE Set $\bar t_0 \gets 0$, $\bar\bv_0 \gets \bv_0$, and $i\gets 0$.
\WHILE{$ {\bar{t}_{0}} <T_\infty$}
\STATE Find the the optimal solution $(\mathbf{v}_T^*(\cdot;\bar{t}_0, \bar{\mathbf{v}}_0), \mathbf{u}^*_T(\cdot;\bar{t}_0, \bar{\mathbf{v}}_0))$
over the time horizon $I_{\bar{t}_0}(T)$ by solving the open-loop problem~\eqref{OpT_tran}.
\STATE Set $t_{i+1}\gets \min\{\bar t_0+\delta,T_\infty\}$.
\STATE For all $\tau\in[\bar{t}_0,t_{i+1})$ set $\mathbf{v}_{rh}(\tau):= \mathbf{v}_T^*(\tau;\bar{t}_0,\bar{\mathbf{v}}_0)$ and $\mathbf{u}_{rh}(\tau):=\mathbf{u}^*_T(\tau;\bar{t}_0,\bar{\mathbf{v}}_0)$.
 \STATE Update $\bar\bv_0 \gets \bv^{*}_T(t_{i+1};\bar t_0,\bar\bv_0)$, $\bar t_0\gets t_{i+1}$, and $i\gets i+1$.
\ENDWHILE
\end{algorithmic}
\end{algorithm}
Next, we investigate the finite-horizon problem \eqref{OpT_tran}.

\begin{proposition}
\label{prop1}
 For every $(T,\bar{t}_0,\bar{\mathbf{v}}_0) \in \mathbb{R}_+\times H$, \eqref{OpT_tran} admits a solution $\mathbf{u}^* \in L^2(I_T(\bar{t}_0);\mathbb{R}^N)$. Moreover, this solution satisfies the first-order optimality condition 
\begin{equation}
 \beta \mathbf{u}^{*}(t)-\mathbf{B}^*\mathbf{w}(t) = 0 \qquad   t \in I_{T}(\bar{t}_0).
\end{equation}
where $\mathbf{w}^* \in \{ \partial_t \mathbf{w} \in L^{\tfrac{4}{3}}(I_T(\bar{t}_0);V')  \colon  \mathbf{w} \in  L^{2}(I_T(\bar{t}_0);V)\}\cap C(\overline{I_T}(\bar{t}_0);H)$ is the solution to the adjoint system
\begin{equation}\label{eq:adjoint}
\begin{cases}
-\partial_t \mathbf{w}^*(t)+\nu\mathcal{A}\mathbf{w}^*(t)  + \mathcal{B}^*(\hat{\mathbf{y}}(t)+\mathbf{v}^*(t))\mathbf{w}^*(t) = - \mathbf{v}^*(t)   \qquad   t \in I_{T}(\bar{t}_0),   \\
 \mathbf{w}^*(\bar{t}_0+T)= 0,
\end{cases}
\end{equation}
where $\mathcal{B}^*$ denotes the adjoint operator of  $\mathcal{B}$, and is defined as  
\begin{equation*}
   \mathcal{B}^*(\mathbf{y})\mathbf{v} \coloneqq \Pi \left( (\nabla \mathbf{y})^t \mathbf{v} - ( \mathbf{y} \cdot \nabla )\mathbf{v} \right). 
\end{equation*}
Additionally, $\mathbf{v}^* \in W(I_{T}(\bar{t}_0);V,V')$ denotes the state trajectory associated with the optimal control $\mathbf{u}^*$, that is, the solution of \eqref{e16n} corresponding to $\mathbf{u}^*$.
\end{proposition}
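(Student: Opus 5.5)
The plan has two parts: existence of a minimizer by the direct method, then the optimality system via differentiability of the control-to-state map and a Lagrangian/adjoint argument.

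\textbf{Existence.} By Proposition~\ref{Theo2} with $\mathbf f=\mathbf B\mathbf u\in L^2(I_T(\bar t_0);H)\subset L^2(I_T(\bar t_0);V')$, every control $\mathbf u\in L^2(I_T(\bar t_0);\mathbb R^N)$ has a unique state $\mathbf v(\mathbf u)\in W(I_T(\bar t_0);V,V')$. Hence the functional $J^{\mathrm{tr}}_T(\cdot;\bar t_0,\bar{\mathbf v}_0)$ is well defined, nonnegative and finite, and $j:=\inf J^{\mathrm{tr}}_T$ exists.
\begin{enumerate}
\item Take a minimizing sequence $\mathbf u_n$. Since $\frac{\beta}{2}\|\mathbf u_n\|^2_{L^2}\le J^{\mathrm{tr}}_T(\mathbf u_n)$, the sequence is bounded, so a subsequence satisfies $\mathbf u_n\rightharpoonup\mathbf u^*$ weakly in $L^2$.
\item By \eqref{e13}, the states $\mathbf v_n$ are bounded in $W(I_T(\bar t_0);V,V')$. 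Aubin--Lions then gives, after extraction, $\mathbf v_n\to\mathbf v^*$ strongly in $L^2(I_T(\bar t_0);H)$ and weakly in $W$.
\item Pass to the limit in \eqref{e19}. The linear terms converge by weak convergence; since $\mathbf B$ has finite rank, $\mathbf B\mathbf u_n\rightharpoonup\mathbf B\mathbf u^*$. For the nonlinear term, take test functions $\varphi\in\mathcal D$ and write $b(\mathbf v_n,\mathbf v_n,\varphi)=-b(\mathbf v_n,\varphi,\mathbf v_n)$. This converges because $\mathbf v_n\to\mathbf v^*$ strongly in $L^2(I_T(\bar t_0);\mathbf L^2)$ and $\nabla\varphi\in L^\infty$.
\item The initial condition passes to the limit via $W\hookrightarrow C(\overline I_T;H)$. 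By uniqueness, $\mathbf v^*=\mathbf v(\mathbf u^*)$.
\item Weak lower semicontinuity of both norms gives $J^{\mathrm{tr}}_T(\mathbf u^*)\le j$, so $\mathbf u^*$ is a minimizer.
\end{enumerate}

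\textbf{Optimality system.} The plan is to show that the control-to-state map $S:\mathbf u\mapsto\mathbf v$ is Fréchet differentiable from $L^2(I_T;\mathbb R^N)$ into $W(I_T;V,V')$, via the implicit function theorem applied to
\[
e(\mathbf v,\mathbf u)=\bigl(\partial_t\mathbf v+\nu\mathcal A\mathbf v+\mathcal B(\hat{\mathbf y})\mathbf v+\mathcal N(\mathbf v)-\mathbf B\mathbf u,\ \mathbf v(\bar t_0)-\bar{\mathbf v}_0\bigr),
\]
regarded as a map from $W\times L^2$ into $L^2(V')\times H$.
\begin{enumerate}
\item Check that $e$ is well defined and $C^1$. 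In 2D, \eqref{e114} gives $\mathcal N(\mathbf v)\in L^2(I_T;V')$ for $\mathbf v\in W\subset L^\infty(H)\cap L^2(V)$, and $\mathcal N$ is $C^1$ because it is a continuous bilinear map.
\item Check that $e_{\mathbf v}$ is an isomorphism. This amounts to unique solvability, with energy estimates, of the linearized equation
\[
\partial_t\mathbf z+\nu\mathcal A\mathbf z+\mathcal B(\hat{\mathbf y}+\mathbf v^*)\mathbf z=\mathbf g ,
\]
where $\mathcal B(\hat{\mathbf y}+\mathbf v^*)\mathbf z$ is shorthand for $B(\hat{\mathbf y},\mathbf z)+B(\mathbf z,\hat{\mathbf y})+B(\mathbf v^*,\mathbf z)+B(\mathbf z,\mathbf v^*)$. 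This follows by Galerkin and Gronwall, using $b(\cdot,\mathbf z,\mathbf z)=0$ and, by \eqref{e114},
\[
|b(\mathbf z,\mathbf v^*,\mathbf z)|\le c\|\mathbf z\|_H\|\mathbf z\|_V\|\mathbf v^*\|_V ,
\]
with $\|\mathbf v^*\|^2_V\in L^1$; the $\hat{\mathbf y}$-terms are controlled by \eqref{RA}.
\item Since $\mathbf u^*$ is a minimizer, $J'(\mathbf u^*)\delta\mathbf u=0$ for every direction $\delta\mathbf u$. Writing $\mathbf z=S'(\mathbf u^*)\delta\mathbf u$, this reads
\[
\int(\mathbf v^*,\mathbf z)_H\,dt+\beta\int\mathbf u^*\cdot\delta\mathbf u\,dt=0 .
\]
\item Introduce $\mathbf w^*$ as the solution of \eqref{eq:adjoint}, test the linearized equation with $\mathbf w^*$, and integrate by parts in time. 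This gives
\[
\int(\mathbf v^*,\mathbf z)_H\,dt=-\int\mathbf B^*\mathbf w^*\cdot\delta\mathbf u\,dt .
\]
(The sign convention here is the one consistent with the stated condition $\beta\mathbf u^*=\mathbf B^*\mathbf w^*$ together with the right-hand side $-\mathbf v^*$ in \eqref{eq:adjoint}.) Since $\delta\mathbf u$ is arbitrary, the stated optimality condition follows.
\end{enumerate}

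\textbf{Main obstacle: well-posedness and regularity of the adjoint.} The coefficient $\mathbf v^*$ only lies in $L^\infty(H)\cap L^2(V)$, so $\mathcal B^*(\mathbf v^*)\mathbf w$ is controlled only in a weaker norm. By \eqref{e114} and duality,
\[
\|B(\mathbf v^*,\mathbf w)\|_{V'}\le c\|\mathbf v^*\|_H^{1/2}\|\mathbf v^*\|_V^{1/2}\|\mathbf w\|_H^{1/2}\|\mathbf w\|_V^{1/2}.
\]
The term $(\nabla\mathbf v^*)^t\mathbf w$, tested against $\varphi\in V$, equals $b(\varphi,\mathbf v^*,\mathbf w)$. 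Bounding it with Ladyzhenskaya's inequality gives $\|\mathbf v^*\|_V\|\mathbf w\|_H^{1/2}\|\mathbf w\|_V^{1/2}$; with $\mathbf w\in L^\infty(H)\cap L^2(V)$ this lies only in $L^{4/3}(V')$, which explains the space stated for $\partial_t\mathbf w^*$.

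To handle this, I would do the following.
\begin{enumerate}
\item Obtain the energy estimate by a Galerkin scheme backward in time. Test with $\mathbf w$, use $b(\mathbf v^*,\mathbf w,\mathbf w)=0$, and bound the remaining term by Young's inequality:
\[
|b(\mathbf w,\mathbf v^*,\mathbf w)|\le \frac{\nu}{2}\|\mathbf w\|_V^2+c\|\mathbf v^*\|_V^2\|\mathbf w\|_H^2 .
\]
Gronwall's inequality then bounds $\mathbf w$ in $L^\infty(H)\cap L^2(V)$.
\item Deduce $\partial_t\mathbf w\in L^{4/3}(V')$ from the equation.
\item Prove uniqueness by the same argument applied to the difference of two solutions.
\item Continuity in time, $\mathbf w^*\in C(\overline I_T;H)$, requires care, because $\partial_t\mathbf w$ is only in $L^{4/3}(V')$ and the standard $W(V,V')\hookrightarrow C(H)$ embedding does not apply directly. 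I would use the Lions--Magenes type lemma for $\mathbf w\in L^2(V)$ with $\partial_t\mathbf w\in L^2(V')+L^1(H)$, or argue as in Temam's 2D uniqueness proof.
\end{enumerate}
The same $L^{4/3}$ pairing also has to be justified in the integration-by-parts step that links $\mathbf z$ and $\mathbf w^*$.
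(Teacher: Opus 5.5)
Your proposal follows the same route as the paper, whose proof simply defers to the standard existence argument and the Lagrangian/adjoint derivation of Hinze--Kunisch: the direct method, then differentiability of the control-to-state map via the isomorphism property of the linearization, with the adjoint in the $L^{4/3}(V')$-class. Your existence argument, the implicit-function step and the sign bookkeeping are correct, but the continuity step needs repair. The decomposition $L^2(V')+L^1(H)$ fails, because $\Pi(\nabla\mathbf v^*)^t\mathbf w$ is in general not in $L^1(I_T(\bar t_0);H)$: with $\nabla\mathbf v^*$ only in $L^2(\mathbf L^2)$ this would require $\mathbf w\in L^2(\mathbf L^\infty)$, and $V\not\hookrightarrow\mathbf L^\infty$ in 2D. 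Instead, split $\partial_t\mathbf w\in L^2(V')+L^{4/3}(\mathbf L^{4/3})$; by Ladyzhenskaya's inequality this is dual to $L^2(V)\cap L^4(\mathbf L^4)$, which contains both $\mathbf w$ and $\mathbf z$. Running the Lions--Magenes mollification argument in this pair gives $\mathbf w^*\in C(\overline{I}_T(\bar t_0);H)$, the energy equality needed for uniqueness of the adjoint, and the product rule for $(\mathbf z,\mathbf w^*)_H$ used in the integration by parts.
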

\begin{proof}
The existence proof follows arguments similar to those presented in \cite[Theorem 2.2]{HinKun01}, and the derivation of the first-order optimality conditions proceeds analogously, as in \cite[Section 3]{HinKun01}.
\end{proof}

In the next theorem, we present the main results of this section, namely the exponential stabilizability and suboptimality of the RHC generated by Algorithm \ref{RHA2}. The latter is characterized in terms of the finite- and infinite-horizon value functions, defined as follows:

\begin{definition}
For any $\mathbf{v}_0 \in  H$  the infinite-horizon value function $V_{\infty}: H \to \mathbb{R}_+$ is defined by
\begin{equation*}
V_{\infty}(\mathbf{v}_0):= \inf_{ \mathbf{u} \in L^2(I_{\infty}(0); \mathbb{R}^N)}\{J^{\mathrm{tr}}_{\infty}(\mathbf{u};0,\mathbf{v}_0) \text{  s.t. \eqref{e16n}} \}.
\end{equation*}
Similarly, for every $(T, \bar{t}_0,\bar{\mathbf{v}}_0) \in \mathbb{R}^2_+ \times H$,  the finite-horizon value function $V_{T}: \mathbb{R}_+ \times H \to \mathbb{R}_+$ is defined by
\begin{equation*}
V_{T}(\bar{t}_0, \bar{\mathbf{v}}_0):= \inf_{ \mathbf{u} \in L^2(I_{T}(\bar{t}_0); \mathbb{R}^N)}\{J^{\mathrm{tr}}_{T}(\mathbf{u}; \bar{t}_0,\bar{\mathbf{v}}_0 ) \text{ s.t. \eqref{e16n}} \}.
\end{equation*}
\end{definition}
We are now in a position to state the main results.

\begin{theorem}
\label{theo3}
Suppose that for a given regular enough
\[(\hat{\mathbf{f}}, \hat{\mathbf{h}}, \hat{\mathbf{y}}_0) \in L^2(I_{\infty}(0); \mathbf{L}^2)\times L^2(I_{\infty}(0); {H}^{\frac{1}{2}}(\partial\Omega;\mathbb{R}^2))\times H,\] 
the reference trajectories $(\hat{\mathbf{y}}, \hat{\mathbf{p}})$,  as the  solution to \eqref{e2}, satisfy \eqref{RA}. Further,  assume that for given $\mathcal{U}_{\omega}\subset H$ defined in Theorem \ref{theo1}, $\lambda>0$,
condition \eqref{e34} holds with $\Upsilon=\Upsilon(\lambda,\nu,\hat{\mathbf{y}},\omega) >0 $.  Then,  for any given $\delta$, and  the fixed set of actuators $\mathcal{U}_{\omega} =\mathcal{U}_{\omega}(N)$,  there exist numbers $T^* =T^*(\delta,\mathcal{U}_{\omega})>\delta$ and $\alpha =  \alpha(\delta,\mathcal{U}_{\omega})\in(0, 1)$ such that:  For every fixed  prediction horizon $T \geq T^*$,   and every  $\mathbf{y}_0 \in   \mathrm{B}_{d}(0)$ with $d = d(T)>0$, the RHC $\mathbf{u}_{rh}$ obtained by Algorithm \ref{RHA2} satisfies the suboptimality inequalities  
\begin{equation}
\label{e110}
\alpha  V_{\infty}(\mathbf{v}_0) \leq \alpha J^{\mathrm{tr}}_{\infty}(\mathbf{u}_{rh};0,\mathbf{v}_0) \leq V_{T}(0,\mathbf{v}_0),\leq   V_{\infty}(\mathbf{v}_0),
\end{equation}
and  the exponentially stable estimate
\begin{equation}
\label{e111}
\| \mathbf{v}_{rh}(t)\|^2_{H} \leq c_He^{-\zeta t}\|\mathbf{v}_0\|^2_{H} \quad  \text{ for }  t\geq 0,
\end{equation}
where  $\zeta$ and  $c_H$  depend on $\mathcal{U}_{\omega}$, $\delta$, and $T$, but are independent of $\mathbf{v}_0$.
\end{theorem}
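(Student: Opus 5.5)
We follow the standard relaxed-dynamic-programming route of \cite{Azmi5,RA12,GR08}. The argument has three steps. First, we derive a local bound on the finite-horizon value function. Second, we prove a one-step suboptimality inequality for a single RH step. Third, we concatenate these inequalities to obtain \eqref{e110} and \eqref{e111}.

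\textbf{Step 1 (value-function bound).} We invoke Theorem \ref{theo1}. Condition \eqref{e34} holds, so for $\bar{\mathbf v}_0\in \mathrm B_{r_s}(0)$ the feedback control $\mathbf u(t)$ with $\mathbf B\mathbf u(t)=\Pi\tilde{\mathbf K}_\lambda(t)\mathbf v(t)$ is admissible. Its coefficients satisfy $|\mathbf u(t)|_2\le c\,c_{\tilde{\mathbf K}}\|\mathbf v(t)\|_H$, using that the actuators are linearly independent. Integrating \eqref{e105} then gives
\begin{equation*}
V_T(\bar t_0,\bar{\mathbf v}_0)\le V_\infty(\bar{\mathbf v}_0)\le J^{\mathrm{tr}}_\infty(\mathbf u;\bar t_0,\bar{\mathbf v}_0)\le \gamma\,\|\bar{\mathbf v}_0\|_H^2,\qquad \gamma:=\tfrac{\Theta_1}{2\lambda}\bigl(1+\beta c^2c_{\tilde{\mathbf K}}^2\bigr).
\end{equation*}
This bound holds for all $T$ and uniformly in $\bar t_0$, because the constants in Theorem \ref{theo1} do not depend on $\bar t_0$. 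Together with the lower bound $V_T(\bar t_0,\bar{\mathbf v}_0)\ge \tfrac12\int\|\mathbf v\|_H^2$, this is the key structural property.

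\textbf{Step 2 (invariance of the ball).} We must guarantee that the RH trajectory never leaves the ball $\mathrm B_{r_s}(0)$, so that Step 1 can be reapplied at every sampling instant $t_i$. Let $\mathbf v^*$ be the optimal trajectory from Proposition \ref{prop1}. By \eqref{e14}, applied on suitable subintervals together with optimality,
\begin{equation*}
\|\mathbf v^*(t)\|_H^2\le c_2\bigl(\|\mathbf v^*\|^2_{L^2(H)}+\|\mathbf B\mathbf u^*\|^2_{L^2(V')}\bigr)\le c\,V_T\le c\gamma\|\bar{\mathbf v}_0\|^2_H .
\end{equation*}
This bound is needed at $t=t_{i+1}$. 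The constant $c_2$ depends on $T$, which is why $d=d(T)$. I expect this step to be the main obstacle. The nonlinearity is only $V'$-valued, so the estimate must rely on the energy estimates \eqref{e13}--\eqref{e14} and the uniform-in-$\bar t_0$ bound \eqref{RA}, not on a semigroup argument. Moreover, the smallness must be propagated along the whole sequence $t_i$ via the decrease established in Step 3, rather than via a single-step bound. Concretely, we choose $d$ so that $c\gamma d^2/\alpha<r_s^2$ and argue by induction, using that $V_T(t_i,\mathbf v_{rh}(t_i))$ is nonincreasing.

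\textbf{Step 3 (suboptimality and decay).} Using the bound $V_T\le\gamma\|\cdot\|^2$ from Step 1 at the intermediate times of the horizon, together with Bellman's principle on $I_T(\bar t_0)$, we show the following. For $T\ge T^*(\delta)$ there is $\alpha\in(0,1)$ with
\begin{equation*}
V_T(t_{i+1},\mathbf v_{rh}(t_{i+1}))+\alpha\int_{t_i}^{t_{i+1}}\Bigl(\tfrac12\|\mathbf v_{rh}\|_H^2+\tfrac\beta2|\mathbf u_{rh}|_2^2\Bigr)dt\le V_T(t_i,\mathbf v_{rh}(t_i)).
\end{equation*}
The key point is that an $s\in[\delta,T]$ exists where $\|\mathbf v^*(s)\|_H^2\le 2V_T/(T-\delta)$. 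Restarting there with the control from Theorem \ref{theo1} bounds the tail by $\gamma\|\mathbf v^*(s)\|^2$. This yields $\alpha=1-c\gamma/(T-\delta)\cdot(\dots)$, which is positive for large $T$; this follows \cite[Theorem 4]{Azmi5}. Summing over $i$ gives $\alpha J^{\mathrm{tr}}_\infty(\mathbf u_{rh};0,\mathbf v_0)\le V_T(0,\mathbf v_0)\le V_\infty(\mathbf v_0)$, i.e.\ \eqref{e110}.

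For the decay estimate, Step 2 gives $\|\mathbf v_{rh}(t_i)\|^2\le c V_T(t_i,\cdot)$. The contraction $V_T(t_{i+1})\le \eta V_T(t_i)$ follows from combining the one-step inequality with the lower bound $\int_{t_i}^{t_{i+1}}\|\mathbf v\|^2\gtrsim \|\mathbf v(t_{i+1})\|^2$, which comes from \eqref{e14}, and with $V_T\le\gamma\|\cdot\|^2$. This contraction yields geometric decay at the sampling instants. Estimate \eqref{e13} on each interval $[t_i,t_{i+1}]$ interpolates between sampling instants and gives \eqref{e111} with $\zeta=-\ln\eta/\delta$.
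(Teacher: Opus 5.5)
Your proposal is correct and follows essentially the paper's route: the decrescence bound (P1) obtained by inserting the feedback of Theorem~\ref{theo1} into $J^{\mathrm{tr}}_T$, existence from Proposition~\ref{prop1} (P2), the relaxed dynamic-programming argument of \cite[Theorem 1]{Azmi5} for \eqref{e110} and the geometric decay of $V_T(t_i,\mathbf v_{rh}(t_i))$, and then \eqref{e14} (with \eqref{e13} between sampling instants) to obtain \eqref{e111}. A few minor points need fixing: the actuators $(\phi_{i_1},\phi_{i_2})$ are linearly dependent (there are $M_a^2$ of them in a $2M_a$-dimensional span), so the coefficient bound should come from a fixed bounded right inverse of $\mathbf u\mapsto\sum_i u_i\mathbf 1_{\mathbf R_i}$, such as the minimal-norm coefficients; and the estimates you actually need in the decay step are $\|\mathbf v_{rh}(t_{i+1})\|_H^2\le c\,V_T(t_i,\mathbf v_{rh}(t_i))$ and $\|\mathbf v_{rh}(t_{i+1})\|_H^2\le c\int_{t_i}^{t_{i+1}}\bigl(\tfrac12\|\mathbf v_{rh}\|_H^2+\tfrac\beta2|\mathbf u_{rh}|_2^2\bigr)dt$, i.e.\ with the index shifted to $t_{i+1}$ on the left and with the control term included, which is what \eqref{e14} gives.
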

\begin{proof}
To prove the result, we adapt the proof from \cite[Theorem 1]{Azmi5}, with the primary modification that the $ H$-norm replaces the $V$-norm. The first step is to establish that the finite-horizon value $V_T$ is locally uniformly decrescent with respect to the $H$-norm (property P1). Specifically, we aim to find a radius  $r_s$ and a continuous, non-decreasing, and bounded function $\gamma: \mathbb{R}_+ \to \mathbb{R}_+$ such that 
\begin{equation}
\label{e7}
 V_T( \bar{t}_0, \bar{\mathbf{v}}_0)  \leq  \gamma(T)\| \bar{\mathbf{v}}_0\|^2_{H} \quad \text{ for every } (\bar{t}_0, \bar{\mathbf{v}}_0)\in   \mathbb{R}_+ \times \mathrm{B}_{r_s}(0). 
\end{equation}
This estimate is obtained by employing the stabilizing feedback control constructed in Theorem \ref{theo1} and evaluating it within the cost functional  $J^{tr}_T$. 
 A similar argument was used in the proof of \cite[Proposition 5]{Azmi5}. Next, Proposition \ref{prop1} guarantees the well-posedness of the finite-horizon optimal control problem \eqref{OpT_tran} (property P2) for every initial condition $(\bar{t}_0, \bar{\mathbf{v}}_0) \in \mathbb{R}_+ \times H$. Combining properties P1 and P2, and following arguments analogous to those in \cite[Theorem 1]{Azmi5}, we deduce \eqref{e110} and also the exponential decay estimate
\begin{equation}
\label{e112}
   V_T(t_{i},\mathbf{v}_{rh}((t_{i})) \leq e^{-\zeta t_{k}}  V_T(0,\mathbf{v}_0),
\end{equation}
for any integer $i\geq 0$ and  every $\mathbf{v}_0 \in  \mathrm{B}_{d}(0)$ with $d \leq r_s$ sufficiently small. Finally, estimate \eqref{e111} follows by combining the decay property \eqref{e112} with \eqref{e14} and applying the same type of arguments used in the proof of \cite[Theorem 1]{Azmi5}. \end{proof}

\begin{remark}
Since the performance index functions \eqref{e49} and \eqref{PIF_trans_sys} agree, the finite- and infinite-horizon problems \eqref{opT}, \eqref{OpT_tran} and \eqref{opinf}, \eqref{Op-inf-trans} are equivalent when initialized with $\bar{\mathbf{v}}_0 := \bar{\mathbf{y}}_0-\hat{\mathbf{y}}(\bar{t}_0)$ and $\mathbf{v}_0 := \mathbf{y}_0-\hat{\mathbf{y}}_0$, and admit the same optimal controls. Hence, Algorithms~\ref{RHA} and \ref{RHA2} generate the same RHC controls. In particular, Theorem~\ref{theo3} remains valid for Algorithm~\ref{RHA}; that is, \eqref{e110} holds with $J^o_{\infty}$ in place of $J^{\mathrm{tr}}_{\infty}$, and \eqref{main-stability-result} holds as well.
\end{remark}

\section{Discretization}\label{sec:disc_and_impl}
In this section, we describe the implementation details of Algorithm \ref{RHA2}, including the discretization and optimization procedures. 

It is advantageous, particularly for numerical discretization, to consider the following weak variational formulation. The weak formulation~\eqref{e19} can be equivalently expressed in the velocity space $V_f :=W(I_T(t_0); \mathbf{H}^1_0, \mathbf{H}^{-1})$ by introducing a pressure variable $\mathbf{p}\in Q:=W^{-1, \infty}(I_T(\bar{t_0}); L^2_0(\Omega;\mathbb{R}))$, where $L^2_0(\Omega; \mathbb{R}):=\{ \mathbf{p}\in L^2(\Omega;\mathbb{R}): \, \int_\Omega \mathbf{p}\, \text{d}x=0\}$.

\begin{definition} Under the conditions introduced in Definition~\ref{def:weak}, we call the pair $(\mathbf{v}, \mathbf{p}) \in V_f \times Q$ a weak solution to~\eqref{e16}, if for almost every $t \in I_{T}(\bar{t}_0)$ we have
    \begin{equation}
    \begin{split}
        \label{e20}
       & {\langle\partial_t \mathbf{v}(t),\varphi\rangle}_{V',V}+\nu\,{(\nabla\mathbf{v}(t),\nabla\varphi)}_H+{\langle\mathcal{B}(\hat{\mathbf{y}}(t))\mathbf{v}(t)  + \mathcal{N}( \mathbf{v}(t)),\varphi\rangle}_{V',V}\\& 
        - (\mathbf{p}(t),\div\varphi)_H = {\langle\mathbf{f}(t),\varphi\rangle}_{V',V} \qquad \text{ with }  \quad (\div\,\mathbf{v}(t), \xi)_H = 0
        \end{split}
        \end{equation}
    for all $(\varphi, \xi) \in \mathbf{H}^1_0 \times L^2_0(\Omega;\mathbb{R})$, and $\mathbf{v}(\bar{t}_0)= \bar{\mathbf{v}}_0$ is satisfied in $H$.
\end{definition}
We derive the corresponding discrete formulation based on this velocity–pressure formulation \eqref{e20}. Let ${\cal T}_h$ be a quasi-uniform family of triangulations of  $\Omega$ into triangles, in the sense of~\cite{brenner2008}. For simplicity, we assume that $\Omega$ is polygonal, such that it can be exactly represented by the triangulation. For spatial discretization, we use the lowest-order Taylor–Hood finite element (FE) pair~\cite{HoodTaylor1974}
\begin{align*}
V_h &:=\{ \phi_h\in C(\bar{\Omega};\mathbb{R}^2): \phi_h|_K \in (P_2)^2 \,\quad \forall K\in {\cal T}_h, \, \phi_h=0 \text{ on } \partial\Omega \} \subset \mathbf{H}^1_0,\\
Q_h &:=\{ \phi_h\in C(\bar{\Omega};\mathbb{R})\cap L^2_0(\Omega;\mathbb{R}): \phi_h|_K \in P_1\, \quad \forall K\in {\cal T}_h \}.
\end{align*}
 For temporal discretization, we discretize $I_{\bar{t}_0}(T)$ using equidistant grid points $t_i, i=1,...,N_T$, such that
$\bar{t}_0 \eqqcolon t_0 < t_1 <\ldots < t_{N_T} \coloneq \bar{t}_0+T$, with a time step $\Delta t$.  We then apply a semi-implicit time-stepping scheme, where the nonlinear and linear convection terms are treated explicitly. That is, given an initial vector $\mathbf{v}_h^0 := \bar{\mathbf{v}}_{0}$, we successively compute for $n=1,...,N_T$ the solution $(\mathbf{v}_h^n, \mathbf{p}_h^n) \in V_h \times Q_h$ to the following systems.
\begin{equation}
\begin{aligned}\label{eq:disc}
{\Delta t}^{-1} (\bv_h^{n} - \bv_h^{n-1}, \varphi_h)_H + \nu (\nabla \bv_h^{n}, \nabla \varphi_h)_H - (\mathbf{p}_h^n, \div \varphi_h)_{L^2(\Omega;\mathbb{R})} &= \\ (\mathbf{f}(t_n), \varphi_h)_H -((\bv_h^{n-1}+\hat{\by}_h^{n-1})\cdot \nabla \bv_h^{n-1}, \varphi_h)_H -(\bv_h^{n-1} \cdot \nabla \hat{\by}_h^{n-1}&, \varphi_h)_H,  &&\forall \phi_h \in V_h\\
(\div \bv_h^n, \xi_h)_{L^2(\Omega;\mathbb{R})} &= 0  \, &&\forall \xi_h \in Q_h.
\end{aligned}
\end{equation}
In this formulation, the matrices on the left-hand side of~\eqref{eq:disc} are time-independent and thus, need to be computed once only. Specifically, given the nodal bases $\{\varphi_k: k=1,\ldots,n_v\}$ and $\{\xi_k: k=1,\ldots,n_p\}$ of the finite-dimensional spaces $V_h$ and  $Q_h$, respectively, we define 
\begin{equation}\label{eq:fe_mat}
    \begin{aligned}
        A &:= (a_{ij})_{i,j=1}^{n_v}, \quad   \,a_{ij} :=(\nabla \varphi_j,\nabla \varphi_i)_H, \quad
        M:= (m_{ij})_{i,j=1}^{n_v},\\  m_{ij}&:=(\varphi_j,\varphi_i)_H, \quad  D:= (b_{ij})_{i,j=1}^{n_v,n_p}, \, \quad  b_{ij}:=(\xi_j,\div \varphi_i)_{L^2(\Omega;\mathbb{R})}.
    \end{aligned}
\end{equation}
Moreover, introducing the tensor
\[
c_{ijk} := \bigl((\varphi_j \cdot \nabla)\varphi_k,\varphi_i\bigr)_H,
\qquad
C_i := [c_{ijk}]_{j,k=1}^{n_v} \in \mathbb{R}^{n_v \times n_v},
\qquad
C := \begin{bmatrix}
C_1 \\ \vdots \\ C_{n_v}
\end{bmatrix},
\]
the discrete approximation of the convection operator $(\bv_h \cdot \nabla)\mathbf{w}_h$ can be expressed compactly as $ C (\bv_h \otimes \mathbf{w}_h)$ where $\otimes$ denotes the Kronecker product $\otimes: \R^{n_v} \times \R^{n_v} \to \R^{n_v^2}$.
 We also observe that the term $(\nabla \bv_h)^\top \mathbf{w}_h$ correlates with $C(\mathbf{w}_h \otimes \bv_h)$.
Thus, the system~\eqref{eq:disc} can equivalently be rewritten as the following  linear system with the degrees of freedom $\bv^{n} \in\mathbb{R}^{n_v}$ and $\mathbf{p}^{n} \in\mathbb{R}^{n_p}$
\begin{equation}\label{eq:v_disc}
    \begin{split}
        &M\frac{\bv^{n}- \bv^{n-1}}{\Delta t} + A\bv^{n} - D \mathbf{p}^n =\\  & M B\mathbf{u}^n -C((\bv^{n-1}+\hat{\by}^{n-1})\otimes \bv^{n-1})-C(\bv^{n-1}\otimes \hat{\by}^{n-1}), \qquad  D^T \bv^n = 0,
    \end{split}
\end{equation} for $n = 1,...,N_T$, 
where $\bv_h^n:=\sum_{k=1}^{n_v} \bv_k^n \varphi_k, \, \by_h^n:=\sum_{k=1}^{n_v} \by_k^n \varphi_k$ and $\mathbf{p}_h^n:=\sum_{k=1}^{n_p} \mathbf{p}_k^n \xi_k$. Further, $B\in \R^{n_v \times N}$ encapsulates the control action $B\mathbf{u} = \sum^N_{i =1} u_i  {\mathbf{1}_{R_i}}.$ Similarly, the adjoint system \eqref{eq:adjoint} can be discretized to yield 
\begin{equation}\label{eq:w_disc}
    \begin{aligned}
        M\frac{\bw^{n-1} - \bw^{n}}{\Delta t} +A \bw^{n-1} - D \mathbf{q}^{n-1} &=    - C(\bv^{n}\otimes \hat{\by}^{n})-C(\hat{\by}^{n}\otimes \bv^{n}),\quad
D^T \bw^n &= 0,
    \end{aligned}
\end{equation}
for $n = N_T,...,1$ with terminal condition $\bw^{N_T} = 0.$
For the optimization process, we adopt a discretize-then-optimize approach and rewrite equation~\eqref{OpT_tran} in its reduced form. To solve the resulting unconstrained reduced problem, we apply a spectral gradient method. In this method, the Barzilai–Borwein step size strategy \cite{azmiKunisch3, AzmiKunisch7, barzilai1988two} is combined with a nonmonotone line search to ensure convergence.

\section{Model Order Reduction Based RHC}\label{sec:MOR_RHC}

In this section, we develop a MOR strategy for RHC of the translated Navier--Stokes system \eqref{e16n}. Reduced-order models (ROMs) for the state and adjoint equations are constructed by (discrete) POD. On the first horizon $(0,T)$, the finite-horizon optimal control problem is solved at the full-order FE level, and the corresponding optimal state and adjoint trajectories are used to compute initial POD bases. These bases define the ROM used on subsequent horizons. At each later RHC step, the finite-horizon optimal control problem is solved on the current ROM, and the resulting reduced control is applied to the full-order system to propagate the high-fidelity state. The resulting full-order state trajectory yields new state snapshots; if desired, additional adjoint snapshots are obtained by solving the corresponding full-order adjoint equation. The POD bases may then be updated on selected horizons. More precisely, the construction proceeds as follows:
\begin{enumerate}[(i)]
    \item Solve the finite-horizon optimal control problem on $(0,T)$ at the full-order FE level and collect the associated optimal state and adjoint trajectories as snapshots.
    
    \item Compute $M$-weighted POD bases $\Psi_\bv,\Psi_\bw \in \R^{n_v\times \ell}$, with $\ell \ll n_v$, for the state and adjoint variables.
    
    \item Project the semi-discrete Navier--Stokes state and adjoint equations onto the corresponding POD spaces to obtain a reduced optimality system.
    
    \item For each subsequent horizon $(t_k,t_k+T)$, solve the finite-horizon optimal control problem on the current ROM and apply the resulting reduced control to the full-order system.
    
    \item Collect the resulting full-order state trajectory as new snapshot data. Since the full-order state is available, the corresponding full-order adjoint equation can also be solved, if desired, to generate additional adjoint snapshots.
    
    \item On selected horizons, update the POD bases with the newly collected snapshots and construct an updated ROM for the subsequent RHC steps.
\end{enumerate}

\subsection{Abstract POD Formulation}
\label{subsec:POD-abstract}
We consider the discrete Hilbert space $\R^{n_v}$ and let
$z_1,\dots,z_{N_s}\in\R^{n_v}$ denote a set of snapshots, i.e., time-discrete
solutions of either the state or adjoint equation; in particular,
$z_j=\bv^j$ or $z_j=\bw^j$. We further introduce positive weights
$\alpha_1,\dots,\alpha_{N_s}>0$, which in practice are chosen according to a
quadrature rule in time, e.g., the trapezoidal rule. For a prescribed reduced
dimension $\ell\leq N_s$, the discrete POD problem reads
\begin{equation}
\label{eq:POD-abstract}
\begin{aligned}
 \min_{\psi_1,\dots,\psi_\ell\in \R^{n_v}}\;
   &\frac12 \sum_{k=1}^{N_s} \alpha_k
   \biggl\| z_k-\sum_{i=1}^\ell \langle z_k,\psi_i\rangle \psi_i \biggr\|^2 \\
 \text{s.t.}\qquad
   &\langle \psi_i,\psi_j\rangle=\delta_{ij},
   \qquad i,j=1,\dots,\ell,
\end{aligned}
\end{equation}
where $\delta_{ij}$ denotes the Kronecker delta. The inner product
$\langle\cdot,\cdot\rangle$ will be specified in Section~\ref{sec.discPOD}. The pairwise
orthonormal vectors $\psi_1,\dots,\psi_\ell$ are called POD modes of rank
$\ell$. The solution of \eqref{eq:POD-abstract} is characterized by the correlation operator
\begin{equation}
  R:\R^{n_v}\to \R^{n_v},
  \qquad
  R\psi := \sum_{k=1}^{N_s} \alpha_k \langle z_k,\psi\rangle z_k.
\end{equation}
The operator $R$ is linear, self-adjoint, and positive semidefinite; see,
e.g., \cite{gubisch2017proper}. Let $\{(\lambda_i,\varphi_i)\}_{i=1}^{d_V}$
denote the nonzero eigenpairs of $R$, ordered such that $\lambda_1 \ge \lambda_2 \ge \cdots > 0$, where $d_V:=\dim\bigl(\mathrm{span}\{z_1,\dots,z_{N_s}\}\bigr)\leq N_s$.
Then the POD modes of rank $\ell$ are given by $\psi_i=\varphi_i$ for $i=1,\dots,\ell$.

\subsection{Snapshot Generation for Navier--Stokes RHC}

We generate snapshots from the solution of a single full-order optimal control
problem on the first prediction horizon. More precisely, we solve the open-loop
optimal control problem on $(0,T)$ using the full FE discretization
introduced in Section~\ref{sec:disc_and_impl}, starting from the initial
condition $\bv_0$. The resulting finite-dimensional optimization problem is
solved by the spectral gradient method described in Section~\ref{sec:disc_and_impl}. Each iteration requires a forward solve of the state equation \eqref{eq:v_disc} and a backward solve of the discrete adjoint equation \eqref{eq:w_disc}. During the optimization, we collect all time-discrete state and adjoint
iterates generated by the algorithm. Denoting by $\bv_n^{(i)}$ and
$\bw_n^{(i)}$ the state and adjoint variables at time $t_n$ in iteration $i$,
respectively, we assemble the snapshot matrices
\begin{align*}
  \mathbf{V}
  &:=
  [\bv_0^{(1)},\dots,\bv_{N_T}^{(1)},
    \bv_0^{(2)},\dots,\bv_{N_T}^{(2)},\dots]
  \in \R^{n_v\times N_s^v}, \\
  \mathbf{W}
  &:=
  [\bw_0^{(1)},\dots,\bw_{N_T}^{(1)},
    \bw_0^{(2)},\dots,\bw_{N_T}^{(2)},\dots]
  \in \R^{n_v\times N_s^w},
\end{align*}
where $N_s^v$ and $N_s^w$ denote the total numbers of stored state and adjoint
snapshots, respectively. In practice, we take $N_s^v=N_s^w=:N_s$, and use the same quadrature weights $\alpha_k$ for both the state and adjoint
snapshots.

\subsection{Discrete POD}
\label{sec.discPOD}

The FE discretization induces the inner product
\[
\langle z,v\rangle_M := z^\top Mv,
\qquad
\|z\|_M^2 := z^\top M z,
\]
on $\R^{n_v}$, where $M$ is the mass matrix introduced in
\eqref{eq:fe_mat}. We use this inner product in the discrete POD problem
\eqref{eq:POD-abstract} for both the state and adjoint snapshots. Moreover, let $D:=\diag(\alpha_1,\dots,\alpha_{N_s})\in\R^{N_s\times N_s}$
denote the diagonal matrix of quadrature weights. For the state snapshots, we introduce the weighted snapshot matrix $\widehat{\mathbf V}:=M^{1/2}\mathbf V D^{1/2}$, and compute its singular value decomposition $\widehat{\mathbf V}=U_\bv \Sigma_\bv V_\bv^\top$, where $U_\bv\in\R^{n_v\times d_v}$ has orthonormal columns, $\Sigma_\bv=\diag(\sigma_1^\bv,\dots,\sigma_{d_v}^\bv)$ contains the nonzero singular values, and $d_v=\mathrm{rank}(\widehat{\mathbf V})\leq \min\{n_v,N_s\}$. The POD basis matrix for the state is then defined by
\[
\Psi_\bv := M^{-1/2}U_\bv^\ell \in \R^{n_v\times \ell},
\]
where $U_\bv^\ell$ denotes the first $\ell$ columns of $U_\bv$. By
construction, the columns of $\Psi_\bv$ are $M$-orthonormal. The adjoint basis
$\Psi_\bw\in\R^{n_v\times \ell}$ is defined analogously from the weighted
adjoint snapshot matrix. We next derive the Galerkin ROM for the translated Navier--Stokes system. The
reduced state equation takes the form
\begin{equation}
\label{eq:trans_ROM}
\partial_t M^\ell \bv^\ell(t)
+ \nu A^\ell \bv^\ell(t)
+ C^\ell\bigl((\bv^\ell(t)+\widehat{\by}^\ell(t))\otimes \bv^\ell(t)\bigr)
+ C^\ell\bigl(\bv^\ell(t)\otimes \widehat{\by}^\ell(t)\bigr)
= B_M^\ell u(t),
\end{equation}
where $\bv^\ell(t)\in\R^\ell$ denotes the reduced state, and
\begin{align*}
M^\ell:= \Psi_\bv^\top M \Psi_\bv \in \R^{\ell\times \ell}, \quad
A^\ell:= \Psi_\bv^\top A \Psi_\bv \in \R^{\ell\times \ell}, \quad
B_M^\ell:= \Psi_\bv^\top M B \in \R^{\ell\times N},
\end{align*}
are the reduced mass matrix, stiffness matrix, and control operator,
respectively. The reduced convection operator is given by
\[
C^\ell := \Psi_\bv^\top C(\Psi_\bv\otimes\Psi_\bv)\in \R^{\ell\times \ell^2}.
\]
Concerning time discretization, we proceed exactly as in~\eqref{eq:disc}. The reduced adjoint equation is obtained analogously using the adjoint basis
$\Psi_\bw$. Finally, we note that the snapshots contained in $\mathbf V$ and $\mathbf W$ are discretely
divergence-free by construction. Consequently, the POD basis functions inherit this property, and the pressure term vanishes in the reduced system; see \cite{gunzburger2017ensemble}.

With the full-order and reduced-order discretizations in place, we are now in a position to state the MOR-based RHC algorithm; see Algorithm~\ref{alg:FOM-ROM_RHC}.

\begin{algorithm}[htbp]
\caption{MOR-based RHC($\delta,T,\texttt{updateBasis}$)}\label{alg:FOM-ROM_RHC}
\begin{algorithmic}[1]
\REQUIRE{Final time $T_{\infty} \in \mathbb{R}_{\geq 0}$, sampling time $\delta$, the prediction horizon $T\geq \delta$, 
a reference trajectory $\hat{\mathbf{y}}$, the initial state $\mathbf{v}_0 \coloneqq \mathbf{y}_0-\hat{\mathbf{y}}_0$, and a Boolean flag \texttt{updateBasis} indicating whether the POD bases are updated during RHC.}
\ENSURE{The stability of MOR-based RHC~$\mathbf{u}_{mrh}$, nondecreasing sequence~$\{ t_i \}_{i\in\mathbb{N}}$.}
\STATE Set $\bar t_0 \gets 0$, $\bar\bv_0 \gets \bv_0$, and $i\gets 0$.
\STATE Solve the full-order problem \eqref{OpT_tran} on $I_{\bar t_0}(T)$ to obtain the optimal solution $(\mathbf{v}_T^*(\cdot;\bar{t}_0, \bar{\mathbf{v}}_0), \mathbf{u}^*_T(\cdot;\bar{t}_0, \bar{\mathbf{v}}_0))$ and the associated adjoint $\bw_T^*(\cdot;\bar t_0,\bar\bv_0))$.
\STATE Construct the initial snapshot matrices $\mathbf V$ and $\mathbf W$ from the trajectories generated by the optimization iterations for $\bv_T^*(\cdot;\bar t_0,\bar\bv_0)$ and $\bw_T^*(\cdot;\bar t_0,\bar\bv_0)$, and compute the POD bases $\Psi_\bv$ and $\Psi_\bw$.
\STATE Set $t_{i+1}\gets \min\{\bar t_0+\delta,T_\infty\}$.
\STATE For all $\tau\in[\bar{t}_0,t_{i+1})$ set $\mathbf{v}_{mrh}(\tau):= \mathbf{v}_T^*(\tau;\bar{t}_0,\bar{\mathbf{v}}_0)$ and $\mathbf{u}_{mrh}(\tau):=\mathbf{u}^*_T(\tau;\bar{t}_0,\bar{\mathbf{v}}_0)$.
\STATE Update $\bar\bv_0 \gets \bv_T^*(t_{i+1};\bar t_0,\bar\bv_0)$, $\bar t_0\gets t_{i+1}$, and $i\gets i+1$.
\WHILE{$ {\bar{t}_{0}} <T_\infty$}
\STATE Project the current FOM state $\bar\bv_0$ onto  the reduced space, i.e  $\bar\bv^\ell_0 := \Psi_\bv^\top M \bar\bv_0$;
\STATE Find the the optimal solution $(\mathbf{v}_T^{\ell,*}(\cdot;\bar{t}_0, \bar\bv^\ell_0), \mathbf{u}^{\ell, *}_T(\cdot;\bar{t}_0, \bar\bv^\ell_0))$
over the time horizon $I_{\bar{t}_0}(T)$ by solving the ROM open loop problem 
$$\min_{\mathbf{u} \in L^2(I_{\bar{t}_0}(T),\mathbb{R}^N)} J^{\ell}_{T}(\mathbf{u}; \bar{t}_0,  \bar\bv^\ell_0):= \frac{1}{2}\int_{\bar{t}_0}^{ \bar{t}_0+T}\|\mathbf{v}^\ell(t)\|^2_{M^\ell}\,dt+ \frac{\beta}{2}\int^{\bar{t}_0+T}_{\bar{t}_0} |\mathbf{u}(t)|^2_{2}dt,$$
subject to \eqref{eq:trans_ROM} with initial condition $\mathbf{v}^{\ell}(\bar{t}_0)=\bar\bv^\ell_0$.
 \STATE Compute $\bv_T(\cdot;\bar t_0,\bar\bv_0,\bu_T^{\ell,*})$ by solving the full-order translated system \eqref{e16n} on $I_{\bar t_0}(T)$ with initial condition $\bar\bv_0$ and control $\bu_T^{\ell,*}(\cdot;\bar t_0,\bar\bv^\ell_0)$.
\STATE Set $t_{i+1}\gets \min\{\bar t_0+\delta,T_\infty\}$.
\STATE For all $\tau\in[\bar t_0,t_{i+1})$, set 
\[ \bu_{\rm mrh}(\tau):=\bu_T^{\ell,*}(\tau;\bar t_0,\bar\bv^\ell_0) \qquad \bv_{\rm mrh}(\tau):=\bv_T(\tau;\bar t_0,\bar\bv_0,\bu_T^{\ell,*}).\]
     \IF{\texttt{updateBasis}}
        \STATE Append the newly computed full-order state snapshots from $\bv_T(\cdot;\bar t_0,\bar\bv_0,\bu_T^{\ell,*})$ to $\mathbf V$. If adjoint enrichment is desired, solve the corresponding full-order adjoint equation and append the resulting adjoint snapshots to $\mathbf W$.
        \STATE Recompute the POD bases $\Psi_\bv$ and $\Psi_\bw$ from the updated snapshot sets.
    \ENDIF
    
    \STATE Update $\bar\bv_0 \gets \bv_T(t_{i+1};\bar t_0,\bar\bv_0,\bu_T^{\ell,*})$, $\bar t_0\gets t_{i+1}$, and $i\gets i+1$.
\ENDWHILE
\end{algorithmic}
\end{algorithm}
\section{Numerical Results}\label{sec:Numerical_Results}
We report numerical results for Algorithms~\ref{RHA2} and \ref{alg:FOM-ROM_RHC} on two examples of increasing complexity. The finite-horizon optimal control problems are solved by the spectral gradient method from \cite{AzmiKunisch7} and terminated when the norm of the reduced gradient falls below \(\epsilon=10^{-6}\).

For the numerical implementation, we fix the actuator supports as the rectangular subdomains introduced in Section~\ref{sec:stabilization}. Denoting by \(\phi_j\), \(j=1,\dots,M_a\), the normalized indicator function associated with \(R_j\), we use the componentwise actuator profiles \((\phi_j,0)\) and \((0,\phi_j)\). This yields \(N:=2M_a\) controls and spans all vector-valued actuators from the theoretical construction, since \((\phi_{i_1},\phi_{i_2})=(\phi_{i_1},0)+(0,\phi_{i_2})\) for all \((i_1,i_2)\in\{1,\dots,M_a\}^2\).

\subsection{Example 1: {Rotating Flow within a} Disc} In this example, we consider the stabilization of a rotating flow by means of finitely many localized actuators in the presence of nonhomogeneous Dirichlet boundary conditions. The spatial domain is taken as the unit disk with a square hole centered at the origin, namely $\Omega = B_1(0,0)\setminus [-0.25,0.25]^2$, where $B_r(x)$ denotes the open ball of radius $r>0$ centered at $x\in\R^2$. Accordingly, the boundary is decomposed as $\Gamma=\Gamma_{\mathrm{disc}}\cup \Gamma_{\mathrm{rect}}$,
where $\Gamma_{\mathrm{disc}}=\partial B_1(0,0)$ and $\Gamma_{\mathrm{rect}}=\partial[-0.25,0.25]^2$. The system is driven by the Dirichlet boundary condition $\by|_{\partial\Omega}=\hat{\bh}$, where
\[
\hat{\bh}(x_1,x_2)=
\begin{cases}
(x_2,-x_1), & x\in \Gamma_{\mathrm{disc}},\\[0.3em]
(0,0), & x\in \Gamma_{\mathrm{rect}}.
\end{cases}
\]
We set the external forcing $\hat{\mathbf f}=0$ and take $\by_0=0$ as initial condition. The target flow $\hat{\by}$ is chosen as the stationary limit flow, computed a priori from the corresponding stationary Navier--Stokes system to~\eqref{e2}, obtained by omitting the time-derivative term. We first investigate the extent to which the full-order RHC approach improves the rate of stabilization. To this end, we compare different actuator layouts, viscosities $\nu$, and prediction horizons $T$. Throughout this example, the RHC simulations are run up to the final time $T_\infty=5$ with sampling time $\delta=0.25$.

\paragraph{Actuator layouts} We begin by comparing two actuator layouts: one consisting of 48 actuators supported on a square-shaped control region and one consisting of 25 actuators arranged on an L-shaped control region; see Figure~\ref{fig:L_vs_square_acts}. For $\nu=10^{-2}$ and $T=1$, the evolution of $\|\bv_{rh}(t)\|_H$ is shown in the right panel of Figure~\ref{fig:L_vs_square_acts}. In both cases, the RHC strategy significantly outperforms the uncontrolled flow. Moreover, the configuration with $48$ actuator supports achieves a noticeably faster rate of stabilization than the configuration with $25$ actuator supports.
\begin{figure}[t]
    \centering
    \begin{minipage}{0.25\textwidth}
        \centering
        \includegraphics[width=\textwidth, trim={2.5cm 2.5cm 2.5cm 2.5cm},clip]{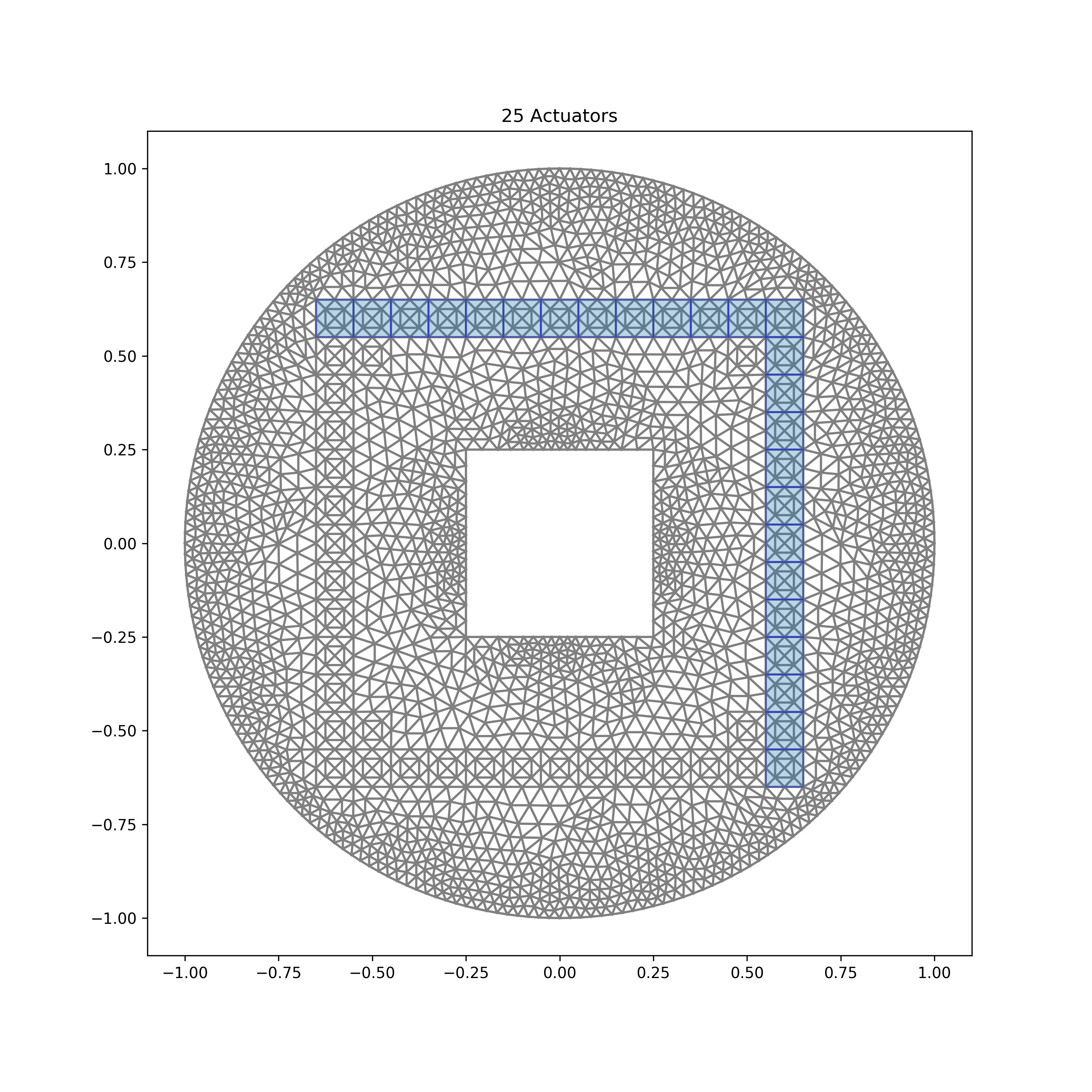}
    \end{minipage}
    \hfil
    \begin{minipage}{0.25\textwidth}
        \centering
        \includegraphics[width=\textwidth, trim={2.5cm 2.5cm 2.5cm 2.5cm}, clip]{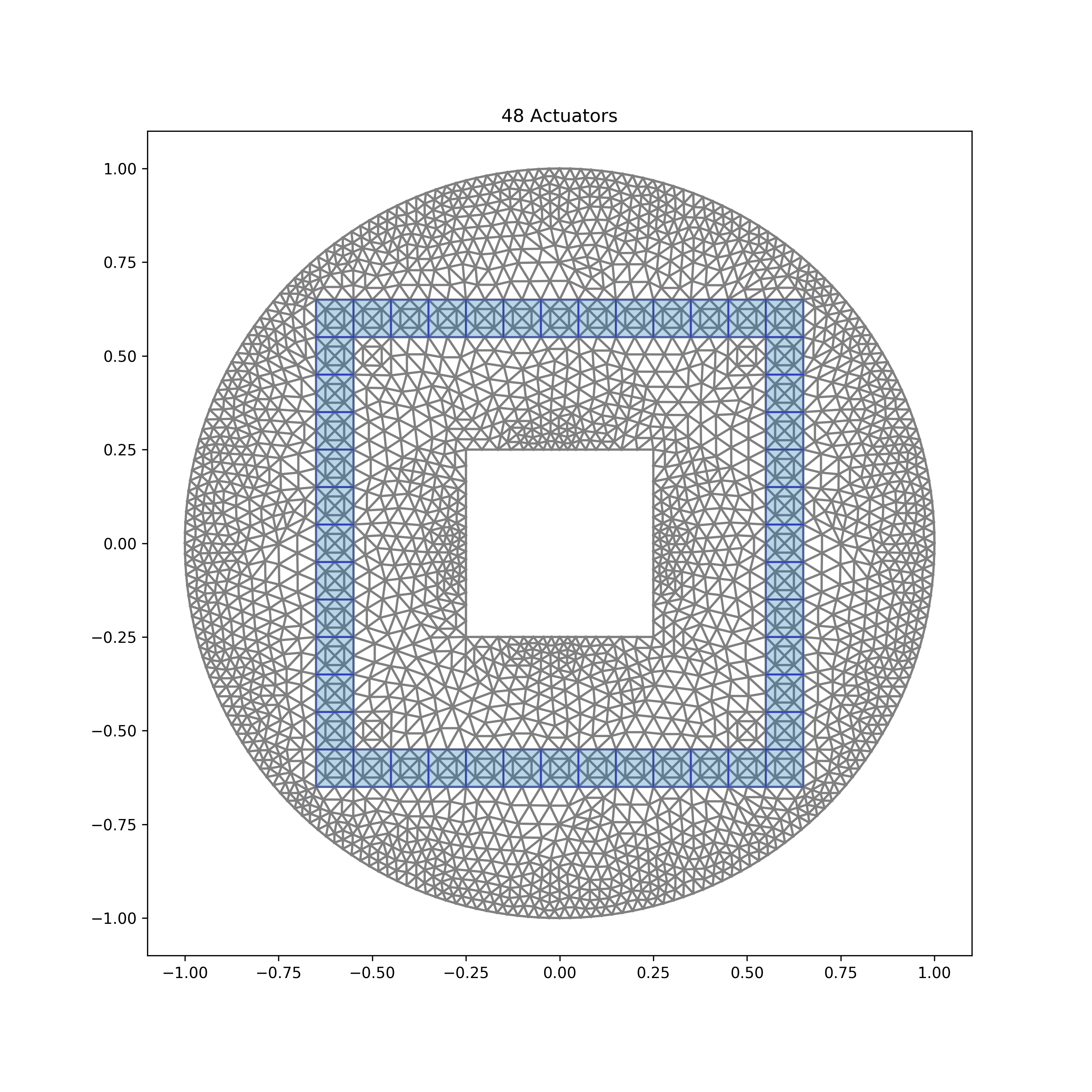}
    \end{minipage} \hfil 
    \begin{minipage}{0.4\textwidth}
        \centering
        \includegraphics[width=\textwidth]{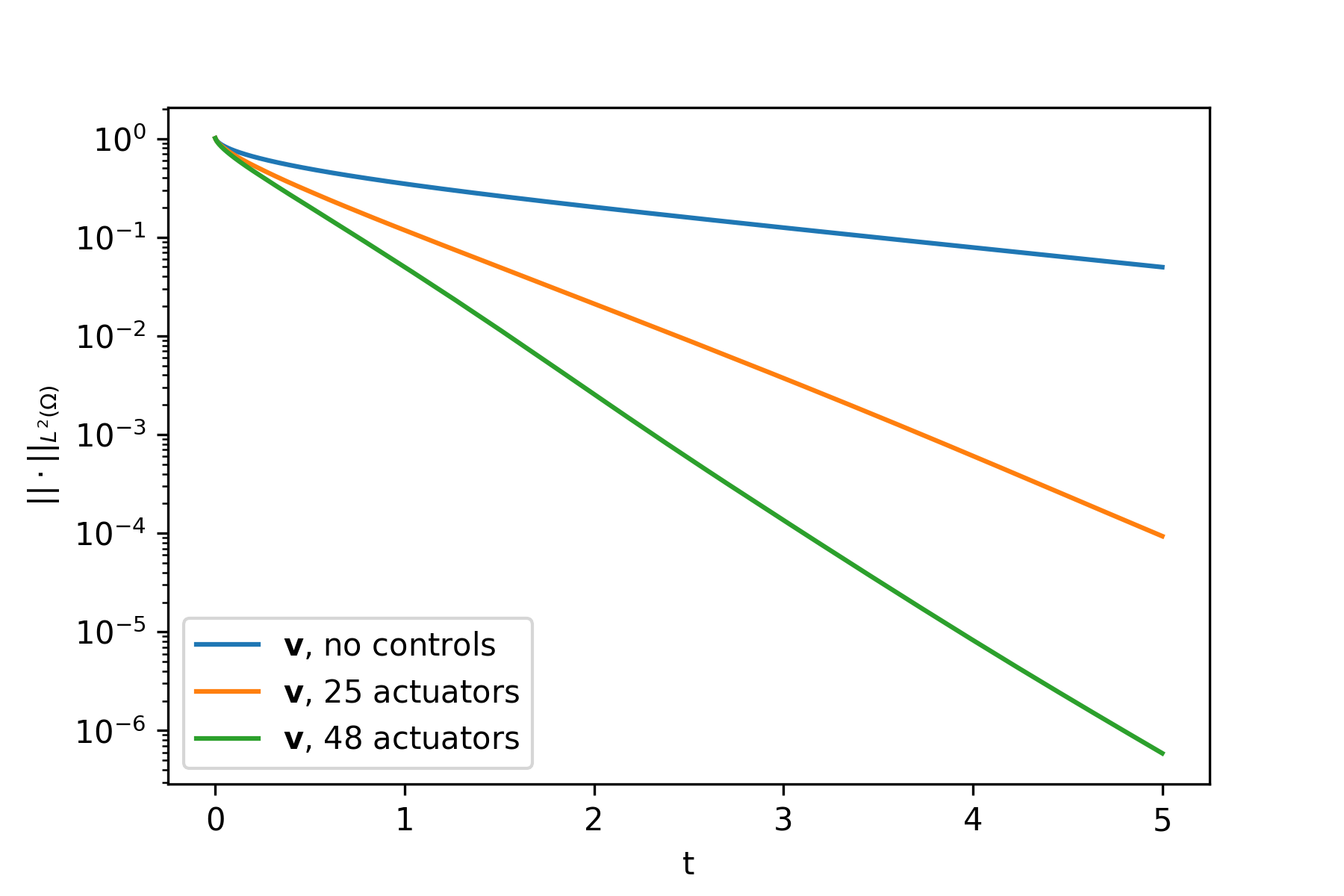}
        \end{minipage}
    \caption{\label{fig:L_vs_square_acts} \textit{Left:} Different actuator layouts used in Example 1. \textit{Right:} Evolution of $\|\mathbf{v}(t)\|_H$ for the two layouts and $T= 1$.}
\end{figure}
\paragraph{Prediction horizon}
We next fix the square actuator layout and compare the performance of the RHC scheme for the prediction horizons $T\in\{0.25,1,2\}$. In general, increasing the prediction horizon improves the closed-loop stabilization performance, since a longer horizon incorporates more information about the future evolution of the flow and thus yields controls that more closely approximate the infinite-horizon solution. This is also reflected in smaller values of the performance index $J_{T_\infty}^o$. On the other hand, larger prediction horizons increase the computational cost of solving the finite-horizon subproblems. Figure~\ref{fig:RHC_different_T} illustrates the effect of the prediction horizon on the evolution of $\|\bv_{rh}(t)\|_H$ and on the values of the objective functional $J_{T_\infty}^o$ defined in \eqref{e49}. Indeed, a substantial improvement is observed when increasing the horizon from $T=0.25$ to $T=1$, whereas the additional gain obtained by further increasing the horizon from $T=1$ to $T=2$ is comparatively small.
\begin{figure}[t]
    \centering
    \begin{minipage}{0.5\textwidth}
        \centering
        \includegraphics[width=\textwidth]{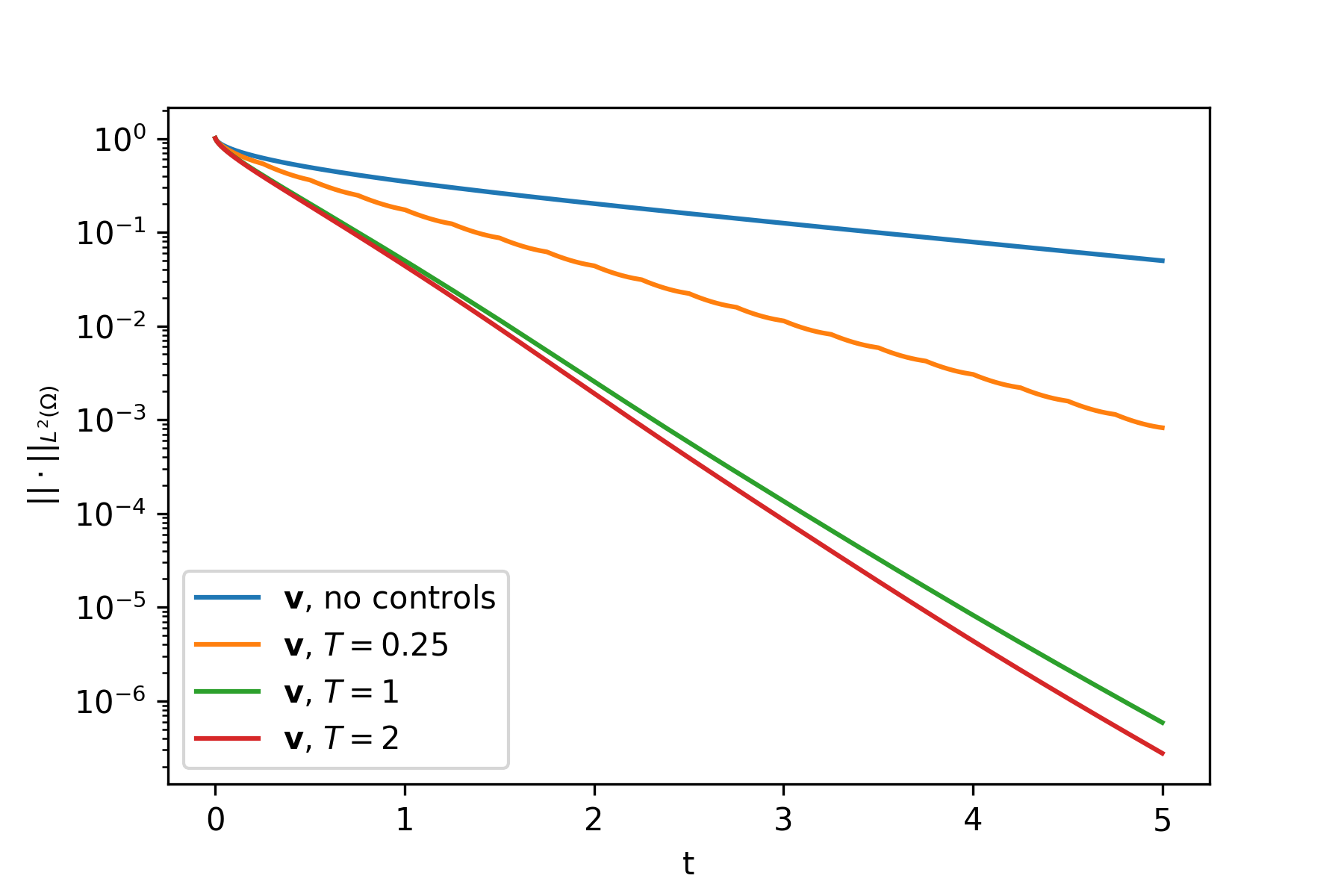}
    \end{minipage}\hfil
\begin{minipage}{0.45\textwidth}
\centering
    \renewcommand{\arraystretch}{1.5} 
    \setlength{\tabcolsep}{10pt} 
    \begin{tabular}{|c|c|}
        \hline
        $T$ & $J_{T_\infty}^o(\bu_{rh}; 0, \by_0)$ \\ 
        \hline \hline
        0.25 & 0.276436303216
 \\ 
        \hline
        1.0 & 0.194958441937
 \\ 
        \hline
        2.0 & 0.194547920119 \\ 
        \hline\hline
        no controls & 0.559781470382\\
        \hline
    \end{tabular}
    \end{minipage}
    \caption{Left: Evolution of $\|\mathbf{v}_{rh}(t)\|_H$ for RHC  with different prediction horizons. Right: Corresponding values of the cost function}\label{fig:RHC_different_T}
\end{figure}

\paragraph{Viscosity}
We next fix $T=1$ and investigate the performance of the RHC scheme for the viscosity values $\nu\in\{10^{-1},10^{-2},10^{-3}\}$. For larger viscosities, the Navier--Stokes system exhibits stronger intrinsic dissipation, and the uncontrolled flow already approaches the stationary state relatively quickly. In this regime, the improvement obtained by RHC over the uncontrolled dynamics is therefore expected to be moderate. For smaller viscosities, by contrast, the dynamics are significantly less dissipative, and the uncontrolled flow requires substantially more time to approach the stationary state. In this case, one expects the benefit of RHC to be more pronounced, as reflected in a faster decay of $\|\bv(t)\|_H$; see Figure~\ref{fig:stabilisation_disc_viscosities}. The numerical results confirm this behavior. For $\nu=0.1$, the RHC-controlled trajectory performs only slightly better than the uncontrolled one, whereas for smaller viscosities the advantage of RHC becomes clearly visible. Among the controlled trajectories, the rate of stabilization is faster for larger viscosities.

\begin{figure}[t]
    \centering
    \begin{minipage}[b]{0.325\textwidth}
        \centering
        \includegraphics[width=\textwidth]{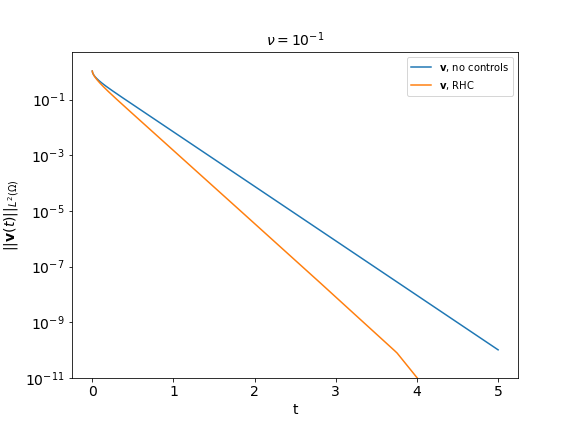}
    \end{minipage}
    \hfil
    \begin{minipage}[b]{0.325\textwidth}
        \centering
        \includegraphics[width=\textwidth]{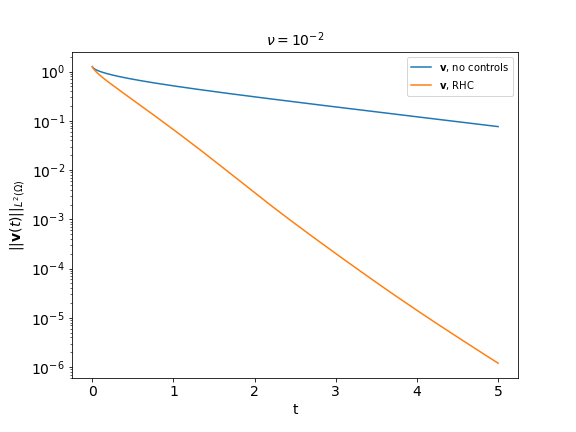}
    \end{minipage}
    \hfil
    \begin{minipage}[b]{0.325\textwidth}
        \centering
        \includegraphics[width=\textwidth]{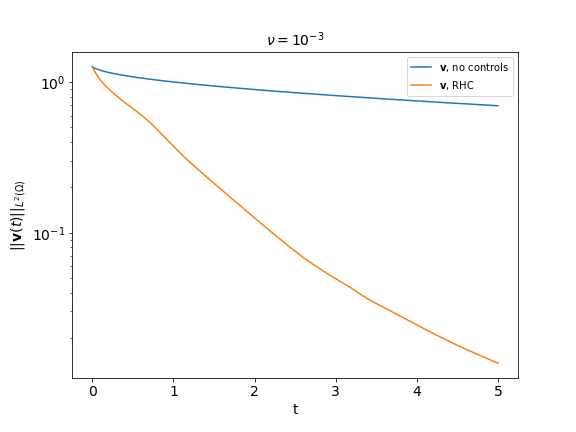}
    \end{minipage}
    \caption{ Evolution of $\|\mathbf{v}(t)\|_H$ for different viscosities $\nu=10^{-1}, 10^{-2}$ and $10^{-3}$ (left to right) in Example 1.}
\label{fig:stabilisation_disc_viscosities}
\end{figure}

\paragraph{MOR-based RHC} Finally, we assess the performance of the MOR-based RHC summarized in Algorithm~\ref{alg:FOM-ROM_RHC}. We fix $\nu=10^{-2}$, $T=1$, and $\delta=0.25$, and compare the method for different POD dimensions $\ell\in\{20,50,100\}$; see the left panel of Figure~\ref{fig.poddisc}. In these experiments, we set \texttt{updateBasis = False} in Algorithm~\ref{alg:FOM-ROM_RHC}, that is, the POD bases are computed only once from the initial full-order solve on the first prediction horizon. The results show that the reduced-order scheme performs nearly as well as the full-order RHC method. In fact, already $\ell=20$ POD basis functions appear to be sufficient in this example, since increasing the basis size does not lead to a visible improvement. In the right panel of Figure~\ref{fig.poddisc}, we also report the runtimes of a full FOM-based RHC simulation and of the MOR-based RHC scheme. The results indicate a speedup of approximately $13.5$ compared to the full-order RHC simulation.


\begin{figure}[t]
\begin{minipage}{0.49\textwidth}
    \centering
    \includegraphics[width=\textwidth]{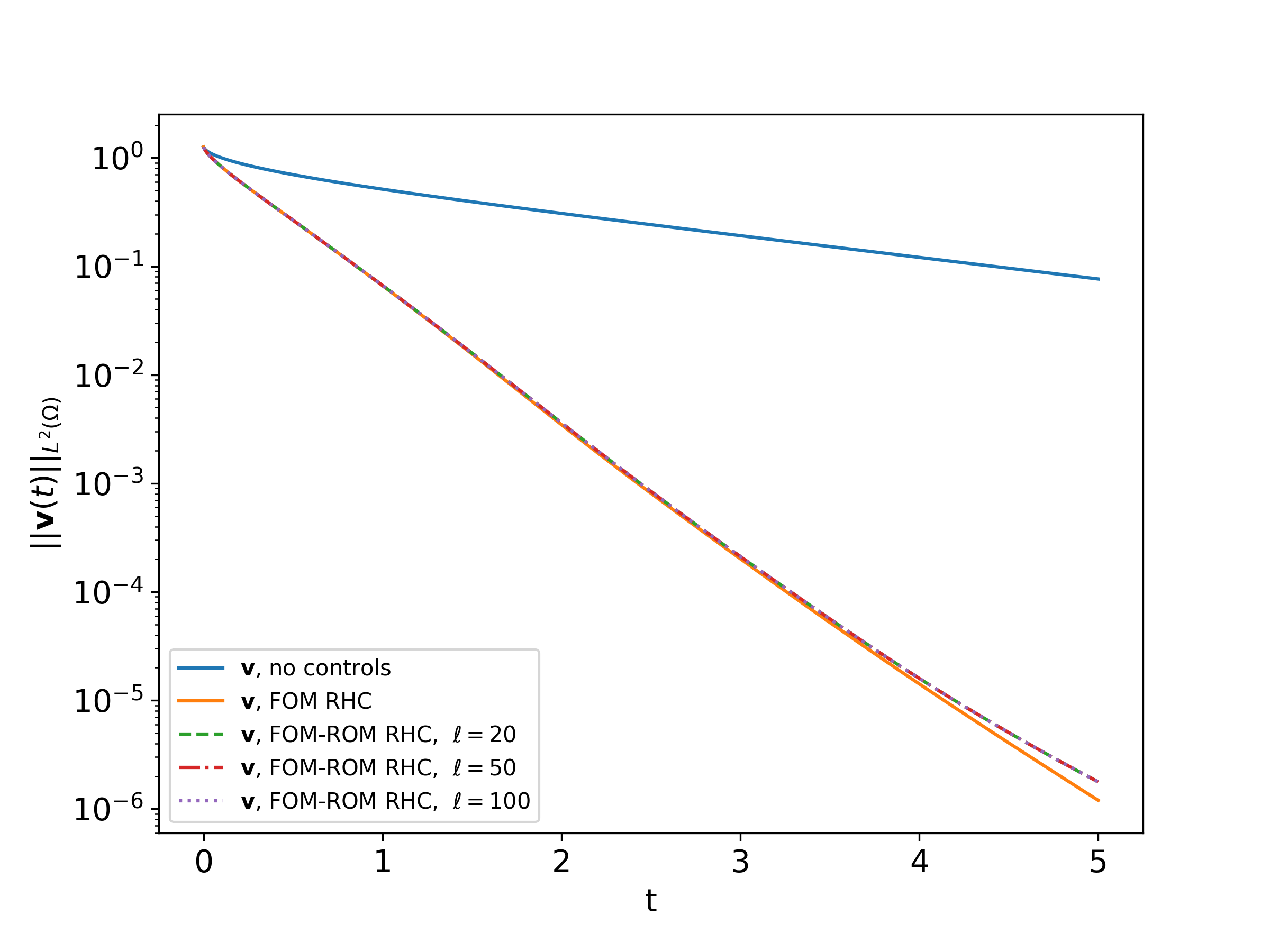}
    \end{minipage}\hfil
\begin{minipage}{0.5\textwidth}
\begin{tabular}{r|rr}
& FOM & FOM-ROM\\
\hline
First horizon & 3\,005 &3\,005\\ 
POD assembly &-- &301\\
Other horizons&42\,191 &76 \\
\hline
Total  & 45\,196 &3\,382\\
\end{tabular}
\end{minipage}
\caption{\label{fig.poddisc} Left: Evolution of  $\|\mathbf{v}(t)\|_H$  for the FOM and MOR-based RHC with different POD-basis sizes applied to Example 1. Right: Runtime (in s) of the FOM algorithm~\ref{RHA2} and the MOR-based algorithm~\ref{alg:FOM-ROM_RHC} with $l=20$ for Example 1.}
\end{figure}



\subsection{Example 2: Flow around an Obstacle} In this example, we investigate the performance of Algorithm~\ref{alg:FOM-ROM_RHC} in a more challenging setting, namely, when the uncontrolled unsteady flow is not asymptotically stable with respect to a given stationary reference state. To this end, we consider the classical two-dimensional channel flow past a cylinder; see Figure~\ref{fig:channel_acts} for the computational domain, its discretization, and the placement of the $57$ rectangular actuator supports, and \cite{schafer1996benchmark} for further details. The domain is given by $\Omega := (0,2.2)\times(0,0.41)\setminus B_{0.05}(0.2,0.2)$, We decompose the boundary as $\partial\Omega=\Gamma_{\mathrm{in}}\cup\Gamma_{\mathrm{out}}\cup\Gamma_{\mathrm{walls}}\cup\Gamma_{\mathrm{cyl}}$,
with
\[
\Gamma_{\mathrm{in}}:=\{0\}\times[0,0.41],\qquad
\Gamma_{\mathrm{out}}:=\{2.2\}\times[0,0.41],
\]
\[
\Gamma_{\mathrm{walls}}:=(0,2.2)\times\{0,0.41\},\qquad
\Gamma_{\mathrm{cyl}}:=\partial B_{0.05}(0.2,0.2).
\]
Homogeneous Dirichlet boundary conditions are imposed on $\Gamma_{\mathrm{walls}}\cup\Gamma_{\mathrm{cyl}}$. On $\Gamma_{\mathrm{in}}$, we prescribe a parabolic inflow profile with maximum velocity $y_{\max}=1.5$, while on $\Gamma_{\mathrm{out}}$ we impose a do-nothing outflow condition:
\begin{equation}\label{eq:channel_flow_inflow}
    \by(0,x_2)=\left(\frac{4y_{\max}x_2(0.41-x_2)}{0.41^2},\,0\right)
    \quad \text{on }\Gamma_{\mathrm{in}},
    \qquad
    \nu \frac{\partial \by}{\partial n}-pn=0
    \quad \text{on }\Gamma_{\mathrm{out}}.
\end{equation}
We fix the viscosity at $\nu=\frac{1}{750}$, corresponding to a Reynolds number of $Re=75$. For this Reynolds number, the flow develops an unsteady regime with time-periodic vortex shedding behind the obstacle; see Figure~\ref{fig:instat_stat_flow} on the left. As desired trajectory $\hat{\by}$, we again choose the stationary solution of the Navier--Stokes equations; see Figure~\ref{fig:instat_stat_flow} on the right. As initial condition, we take $\by_0(x)=\widetilde{\by}(x,0)$, where $\widetilde{\by}$ denotes the corresponding uncontrolled unsteady flow.

\begin{figure}[t]
    \centering
    \includegraphics[width=0.48\textwidth]{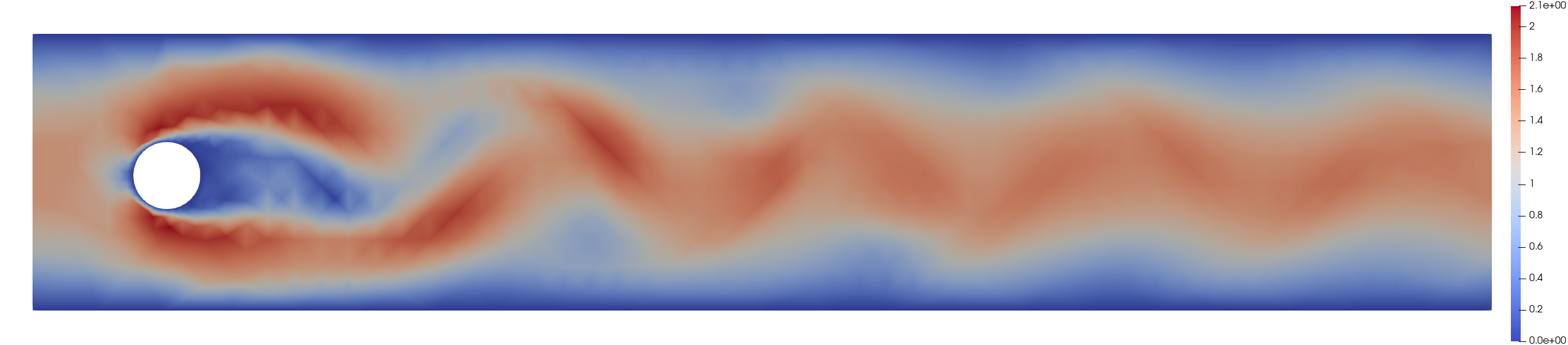}
    \hfil
    \includegraphics[width=0.48\textwidth]{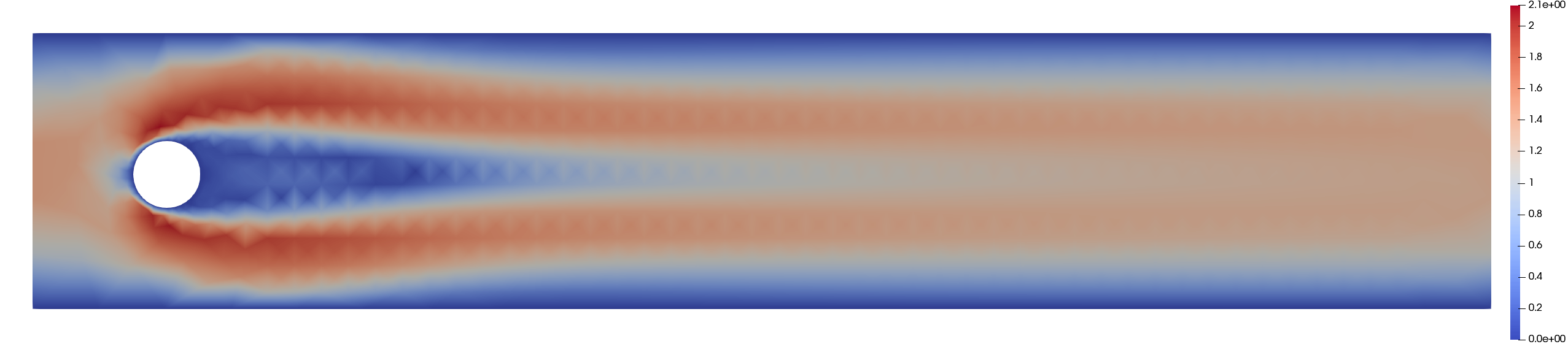}
    \caption{Left: Non-stationary flow $\tilde{\by}$ and initial state $\mathbf{y}_0$. Right: Stationary flow $\hat{\mathbf{y}}$ used in Example 2.}
    \label{fig:instat_stat_flow}
\end{figure}
\begin{figure}[t]
    \centering
    \includegraphics[width=0.8\linewidth]{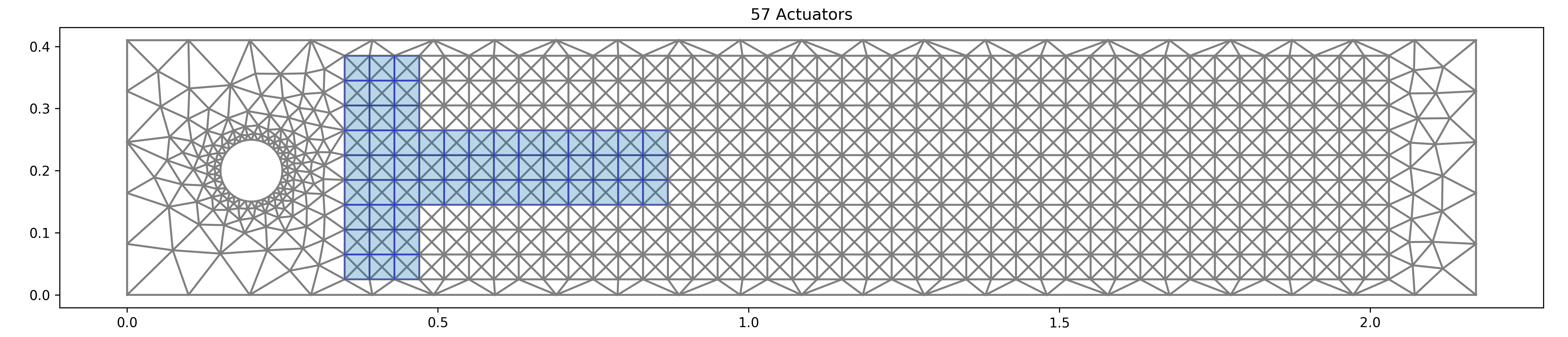}
    \caption{Spatial discretization and actuator layout for the channel flow (Example 2)}\label{fig:channel_acts}
\end{figure}

This example is not covered directly by the theoretical setting of the paper, since the boundary conditions are mixed and include a do-nothing outflow condition on $\Gamma_{\mathrm{out}}$. Nevertheless, the same numerical RHC algorithm can also be applied in this setting. In the absence of control, the flow evolves toward the unsteady vortex-shedding regime. Our objective is to steer the system toward the stationary reference flow $\hat{\by}$ by means of finitely many actuators placed downstream of the obstacle in the T-shaped configuration shown in Figure~\ref{fig:channel_acts}, that is, to achieve $\lim_{t\to\infty}\|\bv(t)\|_H:= \|\by(t)-\hat{\by}\|_H=0$.

\paragraph{Basis updates} As in the previous example, we first apply Algorithm~\ref{alg:FOM-ROM_RHC} with \texttt{updateBasis = False} and choose $T=0.3$, $\delta=0.25$, and $\ell=50$. As shown in Figure~\ref{fig:no_updates_fail}, the ROM quickly loses its ability to generate controls that steer the full-order state toward the stationary reference solution in this example, since it does not capture the system dynamics with sufficient accuracy. While $\|\bv(t)\|_{H}$ decreases monotonically over the first five sampling intervals $(t_0,t_5)$, separated by dotted vertical lines, this behavior changes from the following horizons onward. The controlled trajectory then deteriorates and slowly approaches the uncontrolled one. We also note that, on the first interval, the full-order RHC and MOR-based RHC solutions coincide by construction. In the remaining experiments, we therefore set \texttt{updateBasis = True}, that is, after each feedback horizon we recompute the full-order state and, when needed, the corresponding adjoint to update the POD bases. 
\begin{figure}[t]
    \centering
\includegraphics[width=0.8\textwidth
    ]{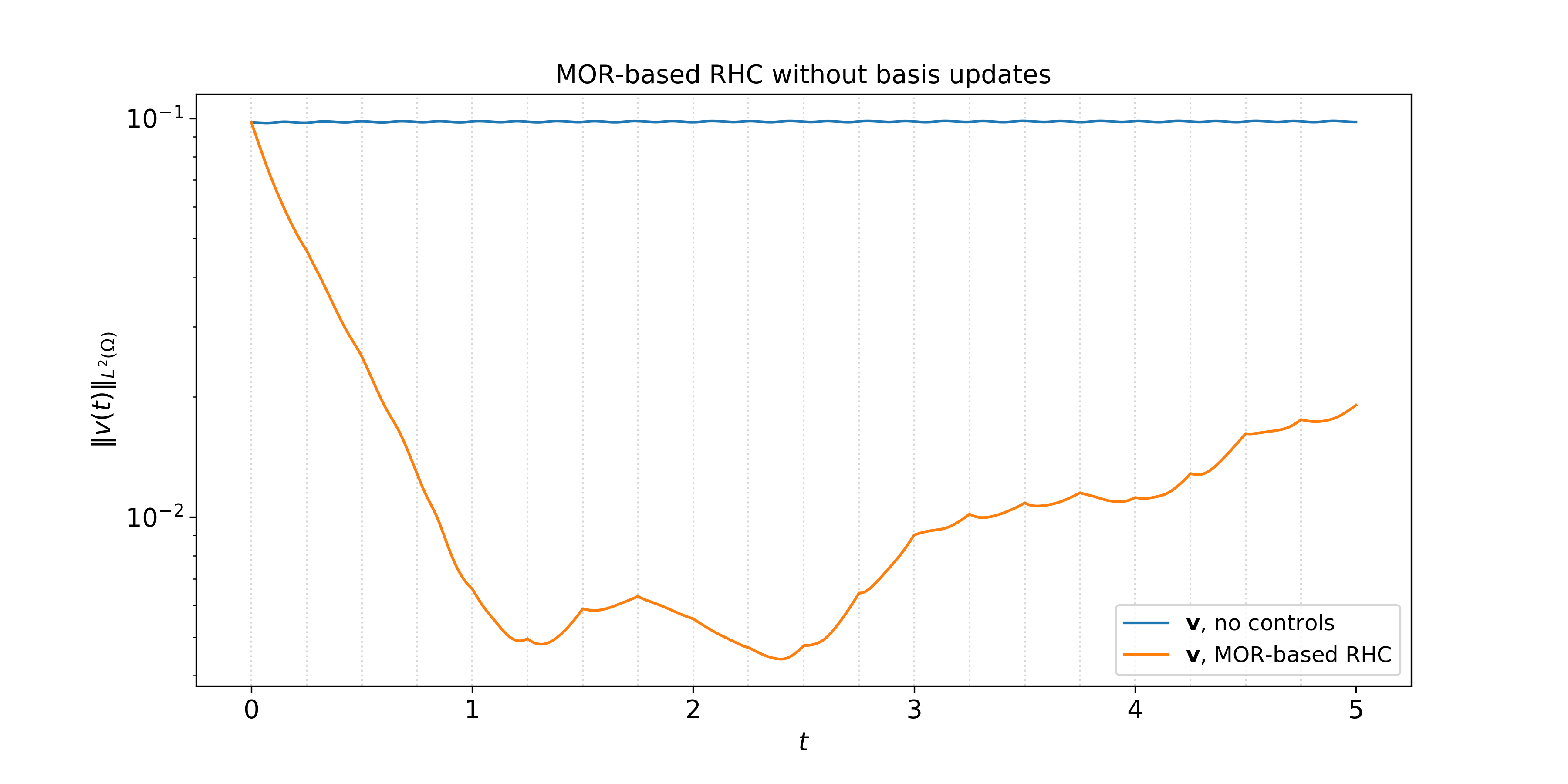}
    \caption{Evolution of $\|\mathbf{v}(t)\|_H$  for the uncontrolled state and the MOR-based RHC algorithm with $T=0.3$, $\ell = 50$, provided the ROM basis is only computed once after the first interval $(0,T)$.}\label{fig:no_updates_fail}
\end{figure}

\begin{figure}[t]
    \centering
    \includegraphics[
        width=0.8\textwidth,
        trim=1.5cm 0cm 3cm 0cm,
        clip
    ]{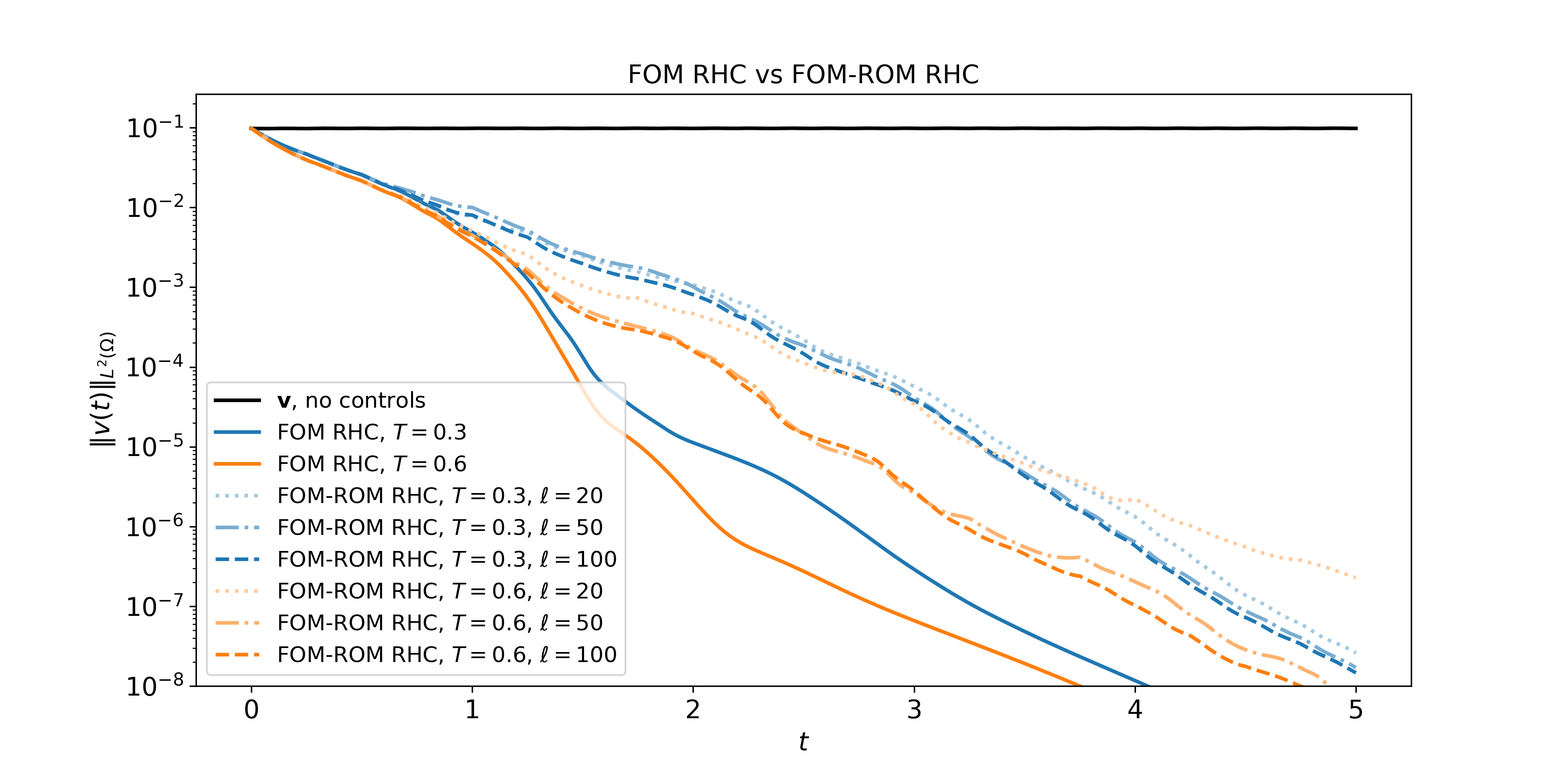}
    \caption{Evolution of $\|\mathbf{v}(t)\|_H$ for the FOM RHC and MOR-based RHC algorithms (FOM-ROM) applied to Example 2 with $T\in\{0.3, 0.6\}$, where the ROM basis is updated regularly.}
    \label{fig:channel_FOM_vs_FOM-ROM}
\end{figure}

\begin{table}[t]
\centering
\begin{tabular}{lrrrr}
\hline
Method & Total & First horizon & POD updates/assembly & Other horizons\\
\hline
FOM & 4\,654 & 441 & -- & 4\,213 \\
FOM-ROM  & 735 & 441 & 290 & 4 \\
\hline
\end{tabular}
\caption{Runtime (in s) for the FOM- and MOR-based RHCs applied to Example 2 with $T = 0.3$ and $\ell = 20.$}\label{tab:channel_runtimes}
\end{table}
\paragraph{Prediction horizon and MOR dimension} Next, we vary the prediction horizons $T\in\{0.3,0.6\}$ and the reduced dimensions $\ell\in\{20,50,100\}$. The corresponding evolution of $\|\bv_{\rm mrh}(t)\|_{H}$ is shown in Figure~\ref{fig:channel_FOM_vs_FOM-ROM}. We observe that increasing the prediction horizon improves the performance of both the full-order RHC and the MOR-based RHC schemes. With basis updates included, the MOR-based RHC is now able to steer the flow toward the stationary reference state. As expected, the reduced-order controller performs slightly worse than the full-order controller, but it still stabilizes the flow and reaches values of $\|\bv(t)\|_{H}<10^{-6}$ in all cases.

Comparing the MOR-based results for different reduced dimensions, we find that enlarging the POD basis systematically improves the rate of stabilization. However, the improvement obtained when increasing $\ell$ from $20$ to $50$ is much more pronounced than the additional gain from increasing $\ell$ from $50$ to $100$. This indicates that, beyond a certain reduced dimension, the dominant flow structures are already sufficiently well resolved, such that further enrichment of the basis yields only marginal improvements. For $\ell=20$, the MOR-based RHC with $T=0.6$ performs worse than with $T=0.3$. This suggests that, for small MOR dimensions, the longer prediction horizon amplifies the effect of model reduction errors, since inaccuracies in the reduced dynamics accumulate over the horizon. As a consequence, the computed gradients become less accurate, which in turn degrades the resulting feedback controls and the overall stabilization performance. The corresponding runtimes are reported in Table~\ref{tab:channel_runtimes}. Despite the repeated basis updates, the MOR-based algorithm is approximately $6.5$ times faster than the full-order RHC scheme.


\subsection*{Concluding Remarks}
We have seen that the full-order RHC approach is able to steer the flow toward the prescribed reference trajectory $\hat{\by}$ in both examples. In the first, comparatively simple, example, the same is true for the MOR-based approach when the reduced basis is constructed only once after the first interval $(0,T)$. In the second, more challenging example, however, this is no longer sufficient, and the reduced basis must be updated regularly to maintain satisfactory closed-loop performance. This points to a natural direction for future work: developing adaptive basis update strategies. Such strategies require reliable error indicators or estimators that detect when the reduced model no longer approximates the full-order dynamics with sufficient accuracy.



\appendix 
\section{Proof of Theorem \ref{theo1}} 
\label{proof_theo1}
The proof is based on the stabilizability results presented in \cite[Proposition 4]{Azmi5} for the linear system. More precisely, this proposition states that for any given rate $\lambda > 0$, there exists a constant $\Upsilon := \Upsilon(\lambda, \nu, \hat{\mathbf{y}}) > 0$ such that for every $N$ satisfying condition \eqref{e34}, there exists a family of continuous operators $\mathbf{K}_{\lambda}(t): H \to \spn{\mathcal{U}_{\omega}}$, uniformly bounded by a constant $c_{\mathbf{K}}$, for which the following linear system is globally exponentially stable
\begin{equation}
\label{e41}
\partial_t \mathbf{v}(t)+\nu  \mathcal{A} \mathbf{v}(t)+ \mathcal{B}( \hat{\mathbf{y}}(t))\mathbf{v}(t)  = \Pi \mathbf{K}_{\lambda}(t)\mathbf{v}(t) \quad   t \in I_{\infty}(\bar{t}_0), \quad
 \mathbf{v}(\bar{t}_0)= \bar{\mathbf{v}}_0.
\end{equation}
 That is, for every $\bar{\mathbf{v}}_0 \in H$, the following estimate holds
\begin{equation*}
\| \mathbf{v}(t) \|^2_H \leq \Theta_3e^{-\lambda(t-\bar{t}_0) }\|\bar{\mathbf{v}}_0\|^2_H  \quad  t\geq \bar{t}_0,
\end{equation*}
for some positive constant $\Theta_3$ independent of $\bar{\mathbf{v}}_0$. It is important to note that the conditions required for this proposition are indeed satisfied, as established in \cite[Proposition 4]{Azmi5} and \cite[Example 2]{Azmi5}. Building on these results, we will prove the local exponential stabilizability of the nonlinear system by applying the Banach fixed-point theorem.
For a given $\kappa >0$, we define 
\begin{equation*}
 \mathcal{H}^{ \lambda, \kappa}_{\bar{t}_0,\infty}:= \left\{ \phi \in \mathcal{H}^{\lambda}_{\bar{t}_0,\infty} :  \|\phi\|^2_{\mathcal{H}^{\lambda}_{\bar{t}_0,\infty}} \leq \kappa  \|\bar{\mathbf{v}}_0\|^2_H \right\}, 
\end{equation*} 
where the Banach space $\mathcal{H}^{\lambda}_{\bar{t}_0,\infty} \subseteq  L^{\infty}(I_{\infty}(\bar{t}_0); H)\cap L^2(I_{\infty}(\bar{t}_0);V)$ is equipped with the norm
\begin{equation*}
\|\mathbf{v}\|^2_{\mathcal{H}^{\lambda}_{\bar{t}_0,\infty}}:= \sup_{t \in I_{\infty}(\bar{t}_0) }\| e^{\frac{\lambda}{2}(t-\bar{t}_0)}\mathbf{v}(t)\|^2_H+\int^{\infty}_{\bar{t}_0} \| e^{\frac{\lambda}{2} (t-\bar{t}_0)} \mathbf{v}(t)\|^2_{V}\,dt.
\end{equation*} 
 Next, we introduce the mapping $\Psi :  \mathcal{H}^{\lambda,\kappa}_{\bar{t}_0,\infty} \to \mathcal{H}^{\lambda}_{\bar{t}_0,\infty}$, which assigns to each $ \mathbf{z}  \in \mathcal{H}^{\lambda, \kappa}_{\bar{t}_0,\infty}$ the solution of 
\begin{equation}
\label{e50}
\begin{cases}
\partial_t \mathbf{w}(t)+\nu  \mathcal{A} \mathbf{w}(t)+ \mathcal{B}( \hat{\mathbf{y}}(t))\mathbf{w}(t)  - \Pi \tilde{\mathbf{K}}_{\lambda}(t)\mathbf{w}(t) = - \mathcal{N}(\mathbf{z}(t))  \quad   t \in I_{\infty}(\bar{t}_0),\\
 \mathbf{w}(\bar{t}_0)= \bar{\mathbf{v}}_0,
\end{cases}
\end{equation} 
where  $\tilde{\mathbf{K}}_{\lambda}(t) := \mathbf{K}_{\bar{\lambda}}(t)$, with $\mathbf{K}_{\bar{\lambda}}(t)$ given by \eqref{e41} in \cite[Proposition 4]{Azmi5}, and where $ \bar{\lambda}  \in (\lambda , 2\lambda]$. 
For this choice of  $\tilde{\mathbf{K}}_{\lambda}$,  the first part of the theorem follows immediately from \cite[Proposition 4]{Azmi5}, with $c_{\tilde{\mathbf{K}}}(\lambda):=c_{\mathbf{K}}(\bar{\lambda})$.

  We now show that the unique solution of \eqref{e41a} is a fixed point of $\Psi$, that is, a function $\mathbf{w}$ satisfying $\Psi(\mathbf{w})=\mathbf{w}$. To this end, we choose $\kappa$ and $r_s$ so that  $\Psi :  \mathcal{H}^{\lambda, \kappa}_{\bar{t}_0,\infty} \to \mathcal{H}^{\lambda,\kappa}_{\bar{t}_0,\infty}$ is a contraction. Throughout the proof, $c$ denotes a generic positive constant independent of $\bar{\mathbf{v}}_0$, $r_s$, and $\kappa$.
 
\paragraph{Step 1: The mapping  $\Psi :  \mathcal{H}^{\lambda, \kappa}_{\bar{t}_0,\infty} \to \mathcal{H}^{\lambda, \kappa}_{\bar{t}_0,\infty}$ is well-defined} Fix $\mathbf{z}\in \mathcal{H}^{\lambda,\kappa}_{\bar{t}_0,\infty}$. Then \eqref{e50} is a linear inhomogeneous evolution equation. Let $\mathbb{W}(s,t)$ denote the evolution operator associated with the homogeneous equation obtained from \eqref{e50} by removing the term $\mathcal{N}(\mathbf{z})$. Thus, $\mathbb{W}(s,t)$ maps the state at time $s$ to the state at time $t$, and in particular, for any initial datum $\bar{\mathbf{v}}_0\in H$ prescribed at time $\bar{t}_0$, we have  
\begin{equation*}
\mathbf{v}(t) = \mathbb{W}(s,t) \mathbf{v}(s)  =  \mathbb{W}(\bar{t}_0,t) \bar{\mathbf{v}}_0.  
\end{equation*}
To this end, we use the representation formula for the weak solution of \eqref{e50}; see \cite[Theorem 2.4.1, Chapter IV]{Sohr01}. Specifically,
\begin{equation}
\label{e6}
\begin{split}
\mathbf{w}(t)   = \mathbb{W}(\bar{t}_0,t)\bar{\mathbf{v}}_0 - \mathcal{A}^{\frac{1}{2}} \int^t_{\bar{t}_0} \mathbb{W}(s,t)  \mathcal{A}^{-\frac{1}{2}} \Pi  \text{div} (\mathbf{z}(s) \mathbf{z}(s))\,ds. 
\end{split}
\end{equation}
Using the exponential stability estimate \cite[(4.19), Proposition 4]{Azmi5}, valid for $\bar{\lambda}\in(\lambda,2\lambda]$, we obtain 
\begin{equation}
\label{e4}
\| \mathbb{W}(\bar{t}_0,t)\bar{\mathbf{v}}_0 \|^2_H \leq \Theta_3e^{-\bar{\lambda}(t-\bar{t}_0)} \|\bar{\mathbf{v}}_0\|^2_H  \quad  t\geq \bar{t}_0.
\end{equation}
Now, define  $ \mathbf{b}(t):= \int^t_{\bar{t}_0} \mathbb{W}(s,t)  \mathcal{A}^{-\frac{1}{2}} \Pi  \text{div} (\mathbf{z}(s) \mathbf{z}(s))\,ds$. Since $\|   \mathcal{A}^{-\frac{1}{2}} \Pi  \text{div}  \|_{\mathcal{L}(H)}\leq 1$ (see, e.g., \cite[Lemma 2.6.1, Chapter III]{Sohr01}), it follows that 
\begin{equation}
\label{e142}
\begin{split}
&\|\mathbf{b}(t)\|_H = \| \int^t_{\bar{t}_0} \mathbb{W}(s,t)  \mathcal{A}^{-\frac{1}{2}} \Pi  \text{div} (\mathbf{z}(s) \mathbf{z}(s))\,ds\|_H \leq \int^t_{\bar{t}_0}\Theta_3^{\frac{1}{2}}e^{-\frac{ \bar{\lambda}}{2}(t-s)} \| \mathbf{z}(s) \mathbf{z}(s)\|_H\\
&\leq \Theta_3^{\frac{1}{2}}  e^{-\frac{ \bar{\lambda}}{2}(t-\bar{t}_0)} \int^t_{\bar{t}_0}  e^{\frac{ \bar{\lambda}}{2}(s-\bar{t}_0)} c \| \mathbf{z}(s)\|_V^2 \,ds
\leq  \Theta_3^{\frac{1}{2}} e^{-\frac{ \bar{\lambda}}{2}(t-\bar{t}_0)} \int^t_{\bar{t}_0} c \|e^{\frac{\lambda}{2}(s-\bar{t}_0)} \mathbf{z}(s)\|_V^2 \\
&\leq c\kappa   e^{-\frac{ \bar{\lambda}}{2}(t-\bar{t}_0)}   \| \bar{\mathbf{v}}_0\|_H^2.
 \end{split}
\end{equation}
Therefore, setting $\mathbf{b}_{\bar{\lambda}} \coloneqq e^{\frac{\bar{\lambda}}{2}(\cdot-\bar{t}_0)}\mathbf{b}$, we can infer that 
\begin{equation}
\label{e3}
    \|\mathbf{b}_{\bar{\lambda}}(t)\|_H \leq c\kappa\|\bar{\mathbf{v}}_0 \|^2_H \qquad \text{ for all  } t\geq \bar{t}_0.
\end{equation}
Moreover, $\mathbf{b}_{\bar{\lambda}}$ solves 
\begin{align}
\label{e100}
\begin{cases}
\partial_t \mathbf{b}_{\bar{\lambda}}(t)+\nu  \mathcal{A} \mathbf{b}_{\bar{\lambda}}(t)-\frac{\bar{\lambda}}{2}\mathbf{b}_{\bar{\lambda}}(t)+ \mathcal{B}( \hat{\mathbf{y}}(t))\mathbf{b}_{\bar{\lambda}}(t)  - \Pi \tilde{\mathbf{K}}_{\lambda}(t)\mathbf{b}_{\bar{\lambda}}(t)\\ 
\hspace*{6cm}= -e^{\frac{\bar{\lambda}}{2}(t-\bar{t}_0)}  \mathcal{F}(t)\quad t \in I_{\infty}(\bar{t}_0),\\
 \mathbf{b}_{\bar{\lambda}}(\bar{t}_0)=\mathbf{b}_0,
\end{cases}
\end{align} 
where $\mathbf{b}_0 = 0$ and  $\mathcal{F} : = \mathcal{A}^{-\frac{1}{2}} \Pi  \text{div} (\mathbf{z} \mathbf{z})$. Using \cite[Lemma 1.2.1(d), Chapter V]{Sohr01}, we can write 
\begin{equation}
\begin{split}
& \|  \mathcal{F}\|_{L^2(I_{\infty}(\bar{t}_0), H)}=   \|\mathcal{A}^{-\frac{1}{2}} \Pi  \text{div} (\mathbf{z} \mathbf{z})\|_{L^2(I_{\infty}(\bar{t}_0), H)} \\ &\leq c \|\mathbf{z} \mathbf{z}\|_{L^2(I_{\infty}(\bar{t}_0), H)}   \leq c \left(  \|\mathbf{z}\|^2_{L^2(I_{\infty}(\bar{t}_0), V)}+  \|\mathbf{z}\|^2_{L^{\infty}(I_{\infty}(\bar{t}_0), H)}  \right),
\end{split}
\end{equation}
and similarly
\begin{equation}
\begin{split}
&\| e^{\frac{\bar{\lambda}}{2}(\cdot-\bar{t}_0)}  \mathcal{F}\|_{L^2(I_{\infty}(\bar{t}_0), H)}  \leq c \|  e^{\lambda(\cdot-\bar{t}_0)}  \mathbf{z} \mathbf{z}\|_{L^2(I_{\infty}(\bar{t}_0), H)}  \\& \leq c \left(  \| e^{\frac{\lambda}{2}(\cdot-\bar{t}_0)}  \mathbf{z}\|^2_{L^2(I_{\infty}(\bar{t}_0), V)}+  \| e^{\frac{\lambda}{2}(\cdot-\bar{t}_0)} \mathbf{z}\|^2_{L^{\infty}(I_{\infty}(\bar{t}_0), H)}  \right) \leq c \kappa \| \bar{\mathbf{v}}_0 \|_H^2.
\end{split}
\end{equation}
Furthermore, by deriving an estimate analogous to \cite[(2.11), Lemma 1]{Azmi5} for \eqref{e100}, one obtains for any $T>0$ and any initial pair $(t_{\mathrm{int}}, \mathbf{b}_{\mathrm{int}})\in \mathbb{R}_+ \times H$ that 
\begin{equation}
\label{e137}
\| \mathcal{A}^{\tfrac{1}{2}} \mathbf{b}_{\bar{\lambda}}(t_{\mathrm{int}}+T) \|^2_H \leq  c_6(T) \left(   \| \mathbf{b}_{\mathrm{int}}\|^2_H+  \int^{t_{\mathrm{int}}+T}_{t_{\mathrm{int}}} \|  e^{\frac{\bar{\lambda}}{2}(s-t_{\mathrm{int}})}  \mathcal{F}(s)\|^2_{H} \,ds \right),
\end{equation}
where $c_6$ depends on $T$, $\bar{\lambda}$, $\tilde{\mathbf{K}}$, and $\hat{\mathbf{y}}$, but is independent of $(t_{\mathrm{int}},\mathbf{b}_{\mathrm{int}})$. Applying \eqref{e137} together with \eqref{e3}, we obtain, for $\hat{\epsilon}>0$ and $k\ge 1$,
\begin{equation}
\label{e119}
\begin{split}
&\|  \mathcal{A}^{\tfrac{1}{2}} \mathbf{b}_{\bar{\lambda}}(\bar{t}_0+k\hat{\epsilon}) \|^2_H \\& \leq  c_6(\hat{\epsilon}) \left(  \| \mathbf{b}_{\bar{\lambda}}(\bar{t}_0+(k-1)\hat{\epsilon}) \|^2_H+  \int^{\bar{t}_0+k\hat{\epsilon}}_{\bar{t}_0+(k-1)\hat{\epsilon}}  \|e^{\bar{\lambda} (s-\bar{t}_0)} \mathcal{F}(s)\|^2_{H} \,ds \right) \\
&\leq c_6(\hat{\epsilon})\left( c \kappa^2\|\bar{\mathbf{v}}_0\|^4_H +  \int^{\infty}_{\bar{t}_0}  \| e^{\bar{\lambda} (s-\bar{t}_0)}   \mathcal{F}(s)\|^2_{H} \,ds \right)\leq  c  \kappa^2\|\bar{\mathbf{v}}_0\|^4_H . 
\end{split}
\end{equation}
Therefore, by the definition of $\mathbf{b}_{\bar{\lambda}}$, a standard continuity argument, and \eqref{e119}, we conclude that
\begin{equation}
\label{e53a}
 \| \mathcal{A}^{\tfrac{1}{2}} \mathbf{b}(t) \|^2_H  \leq  c e^{-\bar{\lambda}(t-\bar{t}_0)} \left( \kappa^2\|\bar{\mathbf{v}}_0\|^4_H \right)  \qquad  t \geq \bar{t}_0. 
\end{equation}
Hence, combining \eqref{e4}, \eqref{e53a}, and \eqref{e6}, we infer that
\begin{equation}
\label{e53}
 \| \mathbf{w}(t) \|^2_H  \leq  c e^{-\bar{\lambda}(t-\bar{t}_0)} \left( \|\bar{\mathbf{v}}_0\|^2_H + \kappa^2\|\bar{\mathbf{v}}_0\|^4_H \right). 
\end{equation}
Next, we consider the following shifted system associated with \eqref{e50}:
\begin{equation*}
\begin{cases}
\partial_t \mathbf{w}_{\lambda}(t)+\nu  \mathcal{A} \mathbf{w}_{\lambda}(t)-\frac{\lambda}{2}\mathbf{w}_{\lambda}(t)+ \mathcal{B}( \hat{\mathbf{y}}(t))\mathbf{w}_{\lambda}(t)  - \Pi \tilde{\mathbf{K}}_{\lambda}(t)\mathbf{w}_{\lambda}(t) \\
\hspace*{6cm}= -e^{\frac{\lambda}{2}(t-\bar{t}_0)}  \mathcal{N}(\mathbf{z}(t))\quad t \in I_{\infty}(\bar{t}_0),\\
 \mathbf{w}_{\lambda}(\bar{t}_0)=  \bar{\mathbf{v}}_0.
\end{cases}
\end{equation*} 
Testing this equation with $\mathbf{w}_{\lambda}$ yields
{\small\begin{equation}
\label{e138}
\begin{split}
&\frac{d}{2dt}\|\mathbf{w}_{\lambda}(t) \|^2_H + \nu \| \mathbf{w}_{\lambda} (t)\|^2_{V} \\&\leq  \abs{ \langle  -\mathcal{B}(\hat{\mathbf{y}}(t)) \mathbf{w}_{\lambda}(t)+\frac{\lambda}{2}\mathbf{w}_{\lambda}(t)+\Pi\tilde{\mathbf{K}}_{\lambda}(t)\mathbf{w}_{\lambda}(t)-e^{\frac{\lambda}{2}(t-\bar{t}_0)}  \mathcal{N}(\mathbf{z}(t)) ,  \mathbf{w}_{\lambda}(t)  \rangle_{V',V}} \\
&\leq c\left( \| \hat{\mathbf{y}}\|_{{L_{\diver}^{\infty}(I_{\infty}(0)\times\Omega ; \mathbb{R}^2)}}+\| \nabla\hat{\mathbf{y}}(t)\|_{L^3(\Omega; \mathbb{R}^4)} \right)\| \mathbf{w}_{\lambda}(t)\|_V\| \mathbf{w}_{\lambda}(t)\|_H\\&+ (\frac{\lambda}{2}+c_{\tilde{\mathbf{K}}})\| \mathbf{w}_{\lambda}(t)\|^2_H+ e^{\frac{\lambda}{2}(t-\bar{t}_0)} \| \mathcal{N}(\mathbf{z}(t)) \|_{V'}\|  \mathbf{w}_{\lambda}(t) \|_{V}.
\end{split}
\end{equation}}
Integrating \eqref{e138} over $[\bar{t}_0,t]$ and using Young's inequality, we obtain, for $t\ge \bar{t}_0$, that 
\begin{equation*}
\begin{split}
&\|\mathbf{w}_{\lambda}(t) \|^2_H + \nu\int^t_{\bar{t}_0} \| \mathbf{w}_{\lambda} (t)\|^2_{V} \,dt \leq  \|\bar{\mathbf{v}}_0\|^2_H+    c \int^{\infty}_{\bar{t}_0} \| \mathbf{w}_{\lambda} (t)\|^2_{H}dt\\
&+  c\int^{\infty}_{\bar{t}_0}\| \nabla\hat{\mathbf{y}}(t)\|^2_{L^3(\Omega; \mathbb{R}^4)}\|\mathbf{w}_{\lambda}(t)\|^2_Hdt  +c\int^{\infty}_{\bar{t}_0}e^{\lambda (t-\bar{t}_0)} \|\mathcal{N}(\mathbf{z}(t))\|^2_{V'} dt.
\end{split}
\end{equation*}  
Recalling \eqref{e114}, we have
\begin{equation*}
\begin{split}
\|e^{\frac{\lambda}{2}(\cdot-\bar{t}_0)}\mathcal{N}(\mathbf{z})\|_{L^2(I_{\infty}(\bar{t}_0),V')}^2 &\leq c \|e^{\frac{\lambda}{2}(\cdot-\bar{t}_0)} \mathbf{z}\|_{L^2(I_{\infty}(\bar{t}_0),V)}^2\| e^{\frac{\lambda}{2}(\cdot-\bar{t}_0)}\mathbf{z} \|_{L^{\infty}(I_{\infty}(\bar{t}_0),H)}^2\\& \leq c\|\mathbf{z}\|^4_{\mathcal{H}^{\lambda}_{\bar{t}_0,\infty}} \leq c\kappa^2\|\bar{\mathbf{v}}_0\|_H^4.
\end{split}
\end{equation*}
Moreover, using \eqref{e53} together with
\begin{equation*}
\begin{split}
\int^{\infty}_{\bar{t}_0}& \| \nabla\hat{\mathbf{y}}(t)\|^2_{L^3(\Omega; \mathbb{R}^4)}\|\mathbf{w}_{\lambda}(t)\|^2_Hdt \leq \sum^{\infty}_{k = 1} \int^{\bar{t}_0+k\hat{\epsilon}}_{\bar{t}_0+(k-1)\hat{\epsilon}} \| \nabla\hat{\mathbf{y}}(t)\|^2_{L^3(\Omega; \mathbb{R}^4)} e^{\lambda (t-{\bar{t}_0})} \|\mathbf{w}(t)\|^2_H \,dt \\
&\leq c\left(\|\bar{\mathbf{v}}_0\|^2_H+ \kappa^2\| \bar{\mathbf{v}}_0 \|^4_H \right)\sum^{\infty}_{k = 1} \int^{\bar{t}_0+k\hat{\epsilon}}_{\bar{t}_0+(k-1)\hat{\epsilon}} \| \nabla\hat{\mathbf{y}}(t)\|^2_{L^3(\Omega; \mathbb{R}^4)}e^{-(\bar{\lambda} - \lambda )(t-{\bar{t}_0})}\,dt \\
&\leq  c\left(\|\bar{\mathbf{v}}_0\|^2_H+ \kappa^2\| \bar{\mathbf{v}}_0 \|^4_H \right) R \sum^{\infty}_{k = 1} \int^{\bar{t}_0+k\hat{\epsilon}}_{\bar{t}_0+(k-1)\hat{\epsilon}} e^{-(\bar{\lambda} - \lambda )(k-1)\hat{\epsilon}}\,dt\leq  c\left(\|\bar{\mathbf{v}}_0\|^2_H+ \kappa^2\| \bar{\mathbf{v}}_0 \|^4_H \right),
\end{split}
\end{equation*}
where $R$ and $\hat{\epsilon}$ are defined in \eqref{RA}, we infer that, for every $t\ge \bar t_0$,
\begin{equation}
\label{e120}
\begin{split}
&\|\mathbf{w}_{\lambda}(t) \|^2_H + \int^t_{\bar{t}_0} \| \mathbf{w}_{\lambda} (t)\|^2_{V} \,dt \leq   c\left(\|\bar{\mathbf{v}}_0\|^2_H+ \kappa^2\| \bar{\mathbf{v}}_0 \|^4_H \right).
\end{split}
\end{equation}  
Now choose $\kappa: = 2c$ and $r_s \leq \frac{1}{2c}$. Then $ c(1+ \kappa^2r^2_s ) \leq \kappa$, and therefore
 $\Psi :  \mathcal{H}^{\lambda, \kappa}_{\bar{t}_0,\infty} \to \mathcal{H}^{\lambda, \kappa}_{\bar{t}_0,\infty}$. This completes Step 1.

\paragraph{Step 2: $\Psi$ is a contraction for sufficiently small $r_s$}
Using the same type of energy estimate as in \eqref{e13}, one obtains
\begin{equation*}
\begin{split}
&\| \Psi (\mathbf{z}_1)- \Psi (\mathbf{z}_2) \|^2_{\mathcal{H}^{\lambda}_{\bar{t}_0,\infty}} \leq  \tilde{c} \| e^{\frac{\lambda}{2}(\cdot-\bar{t}_0)}\left( \mathcal{N}(\mathbf{z}_1) -  \mathcal{N}(\mathbf{z}_2) \right)\|^2_{L^2(I_{\infty}(\bar{t}_0);V')} \\
& \leq  \tilde{c} \left( \|  B(\mathbf{z}_1,e^{\frac{\lambda}{2}(\cdot-\bar{t}_0)} (\mathbf{z}_1 -\mathbf{z}_2))\|^2_{L^2(I_{\infty}(\bar{t}_0);V')}+  \| B(e^{\frac{\lambda}{2}(\cdot-\bar{t}_0)}(\mathbf{z}_1-\mathbf{z}_2),\mathbf{z}_2)\|^2_{L^2(I_{\infty}(\bar{t}_0);V')}  \right) \\
&\leq  \tilde{c}  \Bigl( \| \mathbf{z}_1\|^2_{L^{\infty}(I_{\infty}(\bar{t}_0);H)} \|e^{\frac{\lambda}{2}(\cdot-\bar{t}_0)}(\mathbf{z}_1 -\mathbf{z}_2)\|^2_{L^2(I_{\infty}(\bar{t}_0);V)}\\&+\| e^{\frac{\lambda}{2}(\cdot-\bar{t}_0)}(\mathbf{z}_1 -\mathbf{z}_2)\|^2_{L^{\infty}(I_{\infty}(\bar{t}_0);H)}\|\mathbf{z}_2\|^2_{L^2(I_{\infty}(\bar{t}_0);V)} \Bigr)\\
&\leq  \tilde{c} \left(\|\mathbf{z}_1\|^2_{\mathcal{H}^{\lambda}_{\bar{t}_0,\infty}}+ \|\mathbf{z}_2\|^2_{\mathcal{H}^{\lambda}_{\bar{t}_0,\infty}} \right)  \|\mathbf{z}_1 -\mathbf{z}_2\|^2_{\mathcal{H}^{\lambda}_{\bar{t}_0,\infty}} \leq  \tilde{c} r^2_s \|\mathbf{z}_1 -\mathbf{z}_2\|^2_{\mathcal{H}^{\lambda}_{\bar{t}_0,\infty}}
\end{split}
\end{equation*}
for a generic constant $\tilde{c}$ independent of $r_s$. Hence, it suffices to choose $r_s < \min \{ \frac{1}{\sqrt{\tilde{c}}}, \frac{1}{2c}   \} $. With this choice, $\Psi$ is a contraction. Using analogous energy estimates as in \eqref{e13}, one also obtains a bound for $\| \partial_t \mathbf{v} \|_{L^2(I_{\infty}(\bar{t}_0);V')}$. 
Therefore, \eqref{e41a} is well-posed, and estimate \eqref{e105} follows from \eqref{e120}.
\paragraph{Uniqueness: the solution of \eqref{e41a} is unique in  $L^{\infty}(I_{\infty}(\bar{t}_0); H)\cap L^2(I_{\infty}(\bar{t}_0);V)$} 
Indeed, let $\mathbf{a}$ be another solution, and set $\mathbf{z} :=   \mathbf{v}-\mathbf{a}$. Then $\mathbf{z}$ satisfies 
\begin{equation}
\label{e133}
\begin{cases}
\partial_t \mathbf{z}(t)+\nu\mathcal{A}\mathbf{z}(t)  + \mathcal{B}(\hat{\mathbf{y}}(t))\mathbf{z}(t)  + B(\mathbf{z},\mathbf{v})+ B(\mathbf{a},\mathbf{z})  = \Pi \tilde{\mathbf{K}}_{\lambda}(t)\mathbf{z}(t)    \qquad   t \in I_{\infty}(\bar{t}_0),   \\
 \mathbf{z}(\bar{t}_0)= 0.
\end{cases}
\end{equation}  
Taking the inner product of \eqref{e133} with $\mathbf{z}$ in $H$ and proceeding by standard energy estimates; see, for example, \cite{Temam84}; together with the uniform boundedness of the feedback operator, we conclude that $\mathbf{z}=0$. This proves uniqueness.

\section*{Acknowledgments}
The authors used ChatGPT (OpenAI) solely to edit or polish the authors' written text for spelling, grammar, and general style.

\bibliographystyle{abbrv}


\end{document}